\def\url@smallstyle{%
  \@ifundefined{selectfont}{\def\UrlFont{\sf}}{\def\UrlFont{\small\ttfamily}}}
\numberwithin{equation}{section}
\theoremstyle{plain}
    \newtheorem{theorem}[equation]{Theorem}
    \newtheorem{lemma}[equation]{Lemma}
    \newtheorem{corollary}[equation]{Corollary}
    \newtheorem{proposition}[equation]{Proposition}
    \newtheorem{conjecture}[equation]{Conjecture}
    \newtheorem*{theorem*}{Theorem}
    \newtheorem*{proposition*}{Proposition}
    \newtheorem*{corollary*}{Corollary}
    \newtheorem*{lemma*}{Lemma}
    \newtheorem*{conjecture*}{Conjecture}
    \newtheorem{definition-theorem}[equation]{Definition/Theorem}
    \newtheorem{definition-lemma}[equation]{Definition/Lemma}
\theoremstyle{definition}
    \newtheorem{definition}[equation]{Definition}
    \newtheorem{example}[equation]{Example}
    \newtheorem{examples}[equation]{Examples}
    \newtheorem{remark}[equation]{Remark}
    \newtheorem{remarks}[equation]{Remarks}
    \newcommand{\R}{\mathbb{R}}
    \newcommand{\C}{\mathbb{C}}
    \newcommand{\Z}{\mathbb{Z}}
    \newcommand{\Q}{\mathbb{Q}}
    \renewcommand{\H}{{\mathcal H}}
    \renewcommand{\phi}{\varphi}
    \renewcommand{\epsilon}{\varepsilon}
    \renewcommand{\injlim}{\varinjlim}
    \newcommand{\into}{\hookrightarrow}
\newcommand{\dd}{d} 
\newcommand{\reg}{\mathrm{reg}}
\newcommand{\Centre}{\mathfrak Z}
\newcommand{\argument}{\hspace{2pt}\underbar{\phantom{g}}\hspace{2pt}}
\newcommand{\functor}{\mathbb}
\DeclareMathOperator{\Hom}{Hom}
\DeclareMathOperator{\End}{End}
\DeclareMathOperator{\Trace}{Trace}
\DeclareMathOperator{\Ad}{Ad}
\DeclareMathOperator{\ind}{ind}
\DeclareMathOperator{\SL}{SL}
\DeclareMathOperator{\GL}{GL}
\DeclareMathOperator{\id}{id}
\DeclareMathOperator{\vol}{vol}
\DeclareMathOperator{\lspan}{span}
\DeclareMathOperator{\Class}{Cl}
\newcommand{\Mod}{\mathcal M}
\newcommand{\Proj}{\mathcal P}
\DeclareMathOperator{\ch}{ch}
\DeclareMathOperator{\opp}{op}
\DeclareMathOperator{\pind}{i}
\DeclareMathOperator{\pres}{r}
\DeclareMathOperator{\opind}{\overline{\hspace{.5pt}\i\hspace{.5pt}}}
\DeclareMathOperator{\opres}{\overline{r}}
\newcommand{\HH}{\operatorname{HH}}
\newcommand{\HC}{\operatorname{HC}}
\renewcommand{\O}{\mathcal{O}}
\newcommand{\h}{\operatorname{H}}
\newcommand{\SBI}{SBI}
\newcommand{\coloneq}{:=}
\begin{document}
\title{Restriction to compact subgroups in the cyclic homology of reductive $p$-adic groups}
\author{Tyrone Crisp
\thanks{\href{mailto:crisp@math.ku.dk}{\nolinkurl{crisp@math.ku.dk} }}%
}
\maketitle

\begin{abstract} 
Restriction of functions from a reductive $p$-adic group $G$ to its compact subgroups defines an operator on the Hochschild and cyclic homology of the Hecke algebra of $G$. We study the commutation relations between this operator and others coming from representation theory: Jacquet functors, idempotents in the Bernstein centre, and characters of admissible representations. 
\end{abstract}

\section{Introduction}

Let $G$ be reductive $p$-adic group. Work of Bernstein and others has led to a detailed description of the category $\Mod_f(G)$ of finitely generated smooth complex representations of $G$, in terms of parabolic induction from Levi subgroups \cite{Bernstein-Deligne}. A second approach to the representation theory of $G$, exemplified by the work of Bushnell and Kutzko (e.g., \cite{Bushnell-Kutzko_GLn}), proceeds via compact induction from compact open subgroups. The relationship between the two kinds of induction is the subject of Bushnell and Kutzko's theory of types and covers \cite{Bushnell-Kutzko_Types}. Here we study this relationship from a different point of view: that of cyclic homology.

We consider the Hochschild and cyclic homology groups $\HH_*(\Mod_f(G))$ and $\HC_*(\Mod_f(G))$ associated to $\Mod_f(G)$ as in \cite{Keller}. (The same groups may be obtained via a construction of McCarthy; see Section \ref{McCarthy_section}.) These two homology theories are related by a long exact sequence, which for the sake of brevity we will denote by $\h(\Mod_f(G))$. Results of Bernstein \cite{Bernstein-Deligne} and Keller \cite{Keller} imply that $\h(\Mod_f(G))$ may be described ``geometrically'', that is, in terms of functions on $G$: 
\begin{equation}\label{Fourier_equation} \h(\Mod_f(G)) \cong \h(\H(G)) \end{equation}
where the right-hand side is the Hochschild and cyclic homology of the Hecke algebra of $G$ (see Theorem \ref{Keller_theorem}; similar isomorphisms in degree-zero homology appear, implicitly or explicitly, in \cite{Kazhdan}, \cite{Vigneras}, \cite{Dat_K0}, and \cite{Schneider-Stuhler}, among others).     

The cyclic homology groups of $\H(G)$ have been studied in \cite{Julg-Valette1}, \cite{Julg-Valette2}, \cite{Blanc-Brylinski}, \cite{Higson-Nistor}, and  \cite{Schneider}, where a central role is played by a certain idempotent operator $1_{G_c}$ on $\h(\H(G))$, defined by restricting functions from $G$ to the union of its compact subgroups (see Examples \ref{Keller_examples}(6) for the precise definition). In degree zero, the isomorphism \eqref{Fourier_equation} restricts to
\[  \sum_{\substack{K\subset G \\ \text{compact}\\\text{open}}} \ind_K^G \HH_0(\Mod_f(K)) \cong 1_{G_c}\HH_0(\H(G)),\] where $\ind_K^G:\Mod_f(K)\to \Mod_f(G)$ is the functor of compact induction. In higher degrees, Higson and Nistor \cite{Higson-Nistor} and Schneider \cite{Schneider} have given a description of the image of $1_{G_c}$ in terms of \emph{chamber homology}, which combines the groups $\h(\Mod_f(K))$ and the combinatorics of the Bruhat-Tits building of $G$. 

Motivated by this close connection between the ``compact-restriction'' operator $1_{G_c}$ and the compact-induction functors $\ind_K^G$, we study the commutation relations between $1_{G_c}$ and other representation-theoretic operators: parabolic induction, Jacquet restriction, idempotents in the Bernstein centre, and characters of admissible families of representations. Our main results are summarised below. These results are applied in \cite{Crisp_Bernstein} and \cite{Crisp_Parahoric}.

The functors of parabolic induction and Jacquet restriction with respect to a Levi subgroup $M\subset G$ induce, via \eqref{Fourier_equation}, maps in homology:
\[ \xymatrix@1@C=50pt{ \h(\H(G)) \ar@<0.5ex>[r]^{\pres} & \h(\H(M)) \ar@<0.5ex>[l]^{\pind}}\] 
In Proposition \ref{Jacquet_proposition} we compute the Jacquet restriction map $\pres:\h(\H(G))\to \h(\H(M))$, and show that it is equal to one defined by Nistor in \cite{Nistor}. Nistor suggested that his map be considered an analogue of parabolic induction, and our computation makes this analogy precise. One consequence is that Jacquet restriction commutes with compact induction:
$ \pres 1_{G_c} = 1_{M_c} \pres$. 
In degree-zero homology, this has previously been observed by Dat \cite[Lemme 2.6]{Dat_Idempotents}.
 
Each idempotent $E$ in the Bernstein centre $\Centre(G)$ induces an idempotent endomorphism of $\h(\H(G))$. Results of Higson and Nistor \cite{Higson-Nistor} and Schneider \cite{Schneider} imply that for every endomorphism $T$ of $\h(\H(G))$, the commutator $[T,1_{G_c}]$ is nilpotent of order at most $3$ (see Lemma \ref{SBI_lemma}). Dat has shown that for the idempotents in the Bernstein centre one in fact has $[E,1_{G_c}]=0$ as operators on $\HH_0(\H(G))$. We conjecture that the same holds on all of $\h(\H(G))$, and we prove this conjecture for $G=\SL_2(F)$ (Theorem \ref{SL2_commutation_theorem}; the same argument shows that $[E,1_{G_c}]=0$ on $\HH_n(\H(G))$ for $G$ a split reductive group of rank $n$). 

Dat has also shown, using a formula of Clozel \cite[Proposition 1]{Clozel}, that parabolic induction does not commute with compact restriction in the degree-zero homology of $\SL_2(F)$ \cite[Remarque, p.77]{Dat_Idempotents}. By extending Clozel's formula  to higher homology (in the special case of $\SL_2(F)$), we prove that parabolic induction does commute with compact restriction in higher degrees (Theorem \ref{SL2_Clozel_theorem} and Corollary \ref{SL2_induction_corollary}). Moreover, we show that the failure to commute in degree zero is confined to a single Bernstein component---the unramified principal series---and we derive an explicit formula for the commutator in terms of the Iwahori-Hecke algebra. In particular, we show that this commutator is a rank-one map (Proposition \ref{Iwahori_commutator_proposition}). We conjecture that Clozel's formula is valid in higher homology for all reductive $p$-adic groups. In Section \ref{Weyl_section} we prove the analogue of this conjecture for affine Weyl groups (Proposition \ref{Weyl_Clozel_proposition}).  

Each admissible representation $\pi$ of $G$ determines a map $\HH_0(\H(G))\to \C$, the character of $\pi$. This construction may be extended to families of representations: if $X$ is a complex affine variety, and $\pi$ is an admissible algebraic family of representations of $G$ parametrised by $X$ (in the sense of \cite[1.16]{Bernstein-Deligne}), then the functor $\Hom_G(\argument,\pi):\Mod_f(G)\to \Mod_f(\mathcal O(X))$ induces a map $\ch_\pi:\h(\H(G))\to \h(\mathcal O(X))$. For example, if $\sigma$ is an irreducible supercuspidal representation of a Levi subgroup $M\subseteq G$, and $\Psi$ is the complex torus of unramified characters of $M$, then the parabolically induced representation $\pi= \pind\left(\O(\Psi)\otimes\sigma\right)$ is an admissible family over $\Psi$. The compact-restriction operator $1_{\Lambda_c}$ for the lattice $\Lambda=\Hom(\Psi,\C^\times)$ acts on the homology of $\mathcal O(\Psi)$, and we prove in Proposition \ref{character_proposition} that 
\begin{equation*}\label{character_commutation} 1_{\Lambda_c}\ch_\pi   = \ch_\pi 1_{G_c} \end{equation*}
as maps $\HH_0(\H(G))\to \HH_0(\mathcal O(\Psi))$. If Clozel's formula holds in the higher-degree homology of $M$---for example, if $M$ is a torus---then the above equality is valid on all of $\h(\H(G))$.

\bigskip

This research was partially supported under NSF grant DMS-1101382, and by the Danish National Research Foundation through the Centre for Symmetry and Deformation (DNRF92). Some of the results have previously appeared in the author's Ph.D. thesis, written at The Pennsylvania State University under the direction of Nigel Higson.

\section{Functoriality of cyclic homology}\label{McCarthy_section}

Let $G$ be a reductive $p$-adic group: i.e., the group of $F$-points of a connected reductive group defined over $F$, where $F$ is a finite extension of $\Q_p$. The Hecke algebra $\H(G)$ of locally constant, compactly supported functions $G\to\C$ is an associative algebra under convolution with respect to a choice of Haar measure. We consider the Hochschild and cyclic homology groups $\HH_*(\H(G))$ and $\HC_*(\H(G))$ of this algebra. Our basic reference for cyclic homology is \cite{Loday}. 

Let $\Mod_{f}(G)$  denote the category of finitely generated, smooth representations of $G$, viewed as an exact category enriched over $\C$-vector spaces; let $\HH_*(\Mod_{f}(G))$ and $\HC_*(\Mod_{f}(G))$ denote the Hochschild and cyclic homology groups associated to this category by Keller \cite{Keller}.

The Hochschild and cyclic homology groups of an object $C$ are related by an exact sequence
\[ \ldots \to \HC_{n+1}(C) \xrightarrow{S} \HC_{n-1}(C) \xrightarrow{B} \HH_n(C) \xrightarrow{I}  \HC_{n}(C) \xrightarrow{S} \HC_{n-2}(C) \to \ldots\]
It will be convenient to use the notation $\h(C)$ to refer to this sequence, so that for example ``$f:\h(C)\to \h(C')$'' means that $f$ is a pair of graded linear maps $\HH_*(C)\to \HH_*(C')$ and $\HC_*(C)\to \HC_*(C')$ that commute with the maps $S$, $B$ and $I$. 

The following is a consequence of results of Bernstein and Keller:

\begin{theorem}\label{Keller_theorem} For every reductive $p$-adic group $G$ one has an isomorphism
$\h(\H(G))\cong \h(\Mod_{f}(G))$.
\end{theorem}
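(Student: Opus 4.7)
The plan is to combine two ingredients. The first is Keller's invariance theorem \cite{Keller}: if $\mathcal{E}' \subset \mathcal{E}$ is a fully exact subcategory of an exact category $\mathcal{E}$ such that every object of $\mathcal{E}$ admits a finite resolution by objects of $\mathcal{E}'$, then the inclusion induces an isomorphism on the Hochschild--cyclic long exact sequence, $\h(\mathcal{E}') \cong \h(\mathcal{E})$. The second is Bernstein's theorem that each block of $\Mod_f(G)$ has finite cohomological dimension; equivalently, every finitely generated smooth representation of $G$ admits a finite resolution by finitely generated projective smooth representations.

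With these in hand, let $\Proj_f(G) \subset \Mod_f(G)$ denote the full exact subcategory of finitely generated projective objects. Bernstein's theorem makes $\Proj_f(G)$ into a resolving subcategory of $\Mod_f(G)$, so Keller's invariance gives
\[ \h\bigl(\Proj_f(G)\bigr) \;\cong\; \h\bigl(\Mod_f(G)\bigr). \]
On the other hand, the classical identification of the Hochschild--cyclic homology of a unital ring with that of its category of finitely generated projective modules, combined with the Bernstein decomposition $\H(G) = \bigoplus_{s} \H(G)_s$ and a choice of Morita equivalence between each block $\H(G)_s$ and a unital algebra (e.g.\ via Bushnell--Kutzko types or an elementary idempotent truncation), identifies $\h(\H(G))$ with $\h(\Proj_f(G))$. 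Composing the two isomorphisms proves the theorem.

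The main technical point is the care required by the non-unital nature of $\H(G)$: Keller's direct definition applies to unital rings, so the identification $\h(\H(G)) \cong \h(\Proj_f(G))$ is justified via the Bernstein decomposition, which reduces the statement to a direct sum of statements about unital algebras. Bernstein's finite-global-dimension result must likewise be invoked block-by-block. Neither step is serious once one knows that each Bernstein block is Morita equivalent to a finite-type algebra over its Noetherian centre; the hard work is all contained in the cited results of Bernstein and Keller.
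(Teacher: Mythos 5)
Your proposal is correct and follows essentially the same route as the paper: the Bernstein decomposition reduces to blocks Morita equivalent to unital, Noetherian algebras of finite global dimension, and Keller's comparison of $\h(A)$ with $\h(\Mod_f(A))$ (which you merely unpack into the resolving-subcategory invariance plus the identification with $\h(\Proj_f(A))$) does the rest. The only difference is presentational, since the paper cites Keller's result as a single black box rather than splitting it into the two steps you describe.
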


\begin{proof} According to the Bernstein decomposition \cite{Bernstein-Deligne}, $\H(G)$ is a direct sum of two-sided ideals, each of which is Morita equivalent to a unital, Noetherian algebra of finite global dimension. Keller has shown that for each such algebra $A$ one has $\h(A)\cong \h(\Mod_f(A))$ \cite[1.6]{Keller}. The functor $\h$ commutes with direct sums and is Morita invariant, so the result follows.
\end{proof}

The functoriality of Keller's construction \cite[1.14]{Keller} then gives:

\begin{corollary}\label{Keller_corollary} 
Let $G$ and $G'$ be reductive $p$-adic groups. Each derivable (e.g., exact) $\C$-linear functor $\Mod_{f}(G)\to \Mod_{f}(G')$ induces a canonical map $\h(\H(G))\to \h(\H(G'))$, such that composition of functors corresponds to composition of maps. If $\functor{E} \to \functor{F}\to \functor{G}$ is a short exact sequence of functors, then $\functor{F}=\functor{E}+\functor{G}$ as maps $\h(\H(G))\to \h(\H(G'))$.\hfill\qed
\end{corollary}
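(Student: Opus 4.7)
The plan is essentially to invoke Keller's functoriality theorem and transport it across the isomorphisms provided by Theorem \ref{Keller_theorem}; indeed, the paragraph immediately preceding the statement already announces this, so the proof amounts to quoting \cite[1.14]{Keller} and verifying naturality.

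First I would recall the precise content of \cite[1.14]{Keller}: the assignments $C \mapsto \HH_*(C)$ and $C \mapsto \HC_*(C)$ are functorial on the $2$-category whose objects are exact $\C$-linear categories and whose $1$-morphisms are derivable $\C$-linear functors, and the two functors are intertwined by the $SBI$ sequence in a natural way. Applied to a derivable $\C$-linear functor $F: \Mod_f(G) \to \Mod_f(G')$, this yields a map of $SBI$ sequences $\h(\Mod_f(G)) \to \h(\Mod_f(G'))$; conjugating by the isomorphisms of Theorem \ref{Keller_theorem} produces the claimed map $\h(\H(G)) \to \h(\H(G'))$. The assertion that composition of functors corresponds to composition of maps is then inherited directly from the functoriality of Keller's construction, provided the isomorphism of Theorem \ref{Keller_theorem} is natural---which it is, since it arises from the fully faithful inclusion of an algebra (viewed as a one-object $\C$-linear category) into its module category, composed with a Morita equivalence, both functorial constructions.

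For the additivity statement, I would appeal to the corresponding additivity property of Keller's construction: a short exact sequence $\functor{E} \to \functor{F} \to \functor{G}$ of exact functors induces, on the cyclic mixed complex, a chain homotopy exhibiting the middle map as the sum of the outer two, and hence $\functor{F}_* = \functor{E}_* + \functor{G}_*$ on both $\HH_*$ and $\HC_*$. This is a standard feature of cyclic homology of exact categories, built into Keller's formalism. Transporting across Theorem \ref{Keller_theorem} gives the assertion for Hecke algebras.

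There is no substantive obstacle here; the corollary is a formal consequence of functoriality and additivity results already established in \cite{Keller}. The only point requiring any care is the naturality of the isomorphism of Theorem \ref{Keller_theorem}, and this is immediate from its construction via the Bernstein decomposition together with Keller's Morita-invariance statement.
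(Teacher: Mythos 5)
Your proposal is correct and follows essentially the same route as the paper, which treats the corollary as an immediate consequence of Keller's functoriality and additivity statements \cite[1.14]{Keller} transported across the isomorphism of Theorem \ref{Keller_theorem}; your added remarks on naturality of that isomorphism only make explicit what the paper leaves implicit.
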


Theorem \ref{Keller_theorem} can also be applied in the other direction. Let $\Class^\infty(G)$ denote the space of locally constant, conjugation-invariant functions $G\to \C$. This is an algebra, under pointwise multiplication, and Blanc and Brylinski have shown that $\h(\H(G))$ is in a natural way a $\Class^\infty(G)$-module. (An explicit formula for the module structure is recalled in \eqref{Class_cyclic_equation}, below.) 

\begin{corollary}\label{Keller_corollary2}
$\h(\Mod_f(G))$ is a module over $\Class^\infty(G)$. \hfill\qed
\end{corollary}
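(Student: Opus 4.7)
The plan is essentially to transport the Blanc--Brylinski module structure along the isomorphism supplied by Theorem~\ref{Keller_theorem}. First I would invoke Theorem~\ref{Keller_theorem} to fix an isomorphism $\h(\Mod_f(G)) \cong \h(\H(G))$ of long exact sequences. Blanc and Brylinski have endowed the right-hand side with the structure of a module over $\Class^\infty(G)$ (compatible with the Connes long exact sequence, which is what the formula that will be recalled in \eqref{Class_cyclic_equation} makes explicit), and pulling this action back through the isomorphism produces the desired module structure on $\h(\Mod_f(G))$.

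The only substantive point to check is that the transported action is well-defined and natural; in particular, that it does not depend on any of the choices (choice of Haar measure, decomposition of $\H(G)$ into Bernstein blocks, choice of progenerators for each block) made in the proof of Theorem~\ref{Keller_theorem}. For this I would argue that the Bernstein decomposition of $\H(G)$ is a decomposition of $\Class^\infty(G)$-modules (each block is preserved because elements of $\Class^\infty(G)$ act by Bernstein-centre-like multipliers on each summand), and that Keller's Morita-invariance isomorphism $\h(A) \cong \h(\Mod_f(A))$ is natural in $A$, so it is equivariant for the action of the centre of $A$, and \emph{a fortiori} for the action of any commutative algebra mapping to the centre of the derived endomorphism ring. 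Since the Blanc--Brylinski action factors through such a map, the transported structure is canonical.

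The main (small) obstacle is verifying this compatibility of Blanc--Brylinski's action with the Morita equivalences used in the proof of Theorem~\ref{Keller_theorem}. Since the statement of the corollary only asserts the existence of a module structure, however, one does not need naturality in $G$ here; it is enough to observe that some such structure exists, which follows immediately from the two ingredients above. Consequently the proof reduces to a one-line citation of Theorem~\ref{Keller_theorem} together with the Blanc--Brylinski result, which is presumably why the authors mark the corollary with \textqed.
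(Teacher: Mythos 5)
Your proposal matches the paper's (implicit) argument exactly: the corollary is obtained by transporting the Blanc--Brylinski $\Class^\infty(G)$-module structure on $\h(\H(G))$ through the isomorphism of Theorem~\ref{Keller_theorem}, which is precisely why the paper states it with only a \qedsymbol. Your extra remarks on canonicity are fine but, as you note yourself, not needed for the statement as asserted.
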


\begin{examples}\phantomsection\label{Keller_examples}
\begin{enumerate}
\item Automorphisms: Each algebra automorphism $\alpha$ of $\H(G)$ gives rise to an exact functor---``twist by $\alpha$''---on $\Mod_{f}(G)$. The induced automorphism of $\h(\H(G))$ is the same as the one induced by $\alpha$ as an algebra automorphism. 

\item Central idempotents: Let $E$ be an idempotent in the Bernstein centre $\Centre(G)$ (see \cite{Bernstein-Deligne}). The exact functor $V\mapsto EV$ on $\Mod_{f}(G)$ induces an endomorphism of $\h(\H(G))$, equal to the one induced by $E$ as an endomorphism of the algebra $\H(G)$.

\item Jacquet functors: Let $M$ be a Levi component of a parabolic subgroup $P$ of $G$, and consider the functors $\pind_M^G$ and $\pres^G_M$ of normalised parabolic induction and Jacquet restriction along $P$ \cite[2.5]{Bernstein-Deligne}.
Each of these functors is exact, and preserves the property of being finitely generated \cite[Section 3]{Bernstein-Deligne}, so they induce canonical maps in Hochschild and cyclic homology.
Recall the geometric lemma \cite[2.12]{Bernstein-Zelevinsky_InducedI} of Bernstein and Zelevinsky: given two parabolic subgroups in $P,Q\subset G$, with Levi factors $M\subset P$ and $L\subset Q$, the composite functor $\pres^G_L\pind_M^G$ admits a filtration with subquotients of the form $\pind_{L_w}^L \Ad_w \pres^M_{M_w}$, where $w$ ranges over a set $W$ of coset representatives for $Q\backslash G/P$, and $M_w,L_w$ are Levi factors of certain parabolic subgroups of $M$ and $L$, respectively. This filtration becomes a sum in homology:
\begin{equation*}\label{geometrical_lemma} 
\pres^G_L \pind_M^G = \sum_{w\in W} \pind_{L_w}^L \Ad_w \pres^M_{M_w} : \h(\H(M))\to \h(\H(L)).
\end{equation*}

\item Characters: Let $X$ be a complex affine variety, with coordinate algebra $\O(X)$. Recall from \cite[1.16]{Bernstein-Deligne} that an \emph{algebraic family of representations} of $G$ over $X$, also called a $(G,X)$-module, is an $\H(G)$--$\O(X)$ bimodule that is flat over $\O(X)$. Such a module $V$ is \emph{admissible} if for each compact open subgroup $K\subset G$, the space $V^K$ of $K$-invariants is finitely generated over $\O(X)$.  
If $V$ is admissible, and $M\in \Mod_f(G)$, then $\Hom_G(M,V)$ is finitely generated over $\O(X)$. The resulting functor $\Hom_G(\argument, V):\Mod_{f}(G)\to \Mod_{f}(\O(X))$ is exact on the subcategory $\Proj_{f}(G)$ of projectives in $\Mod_f(G)$, and is therefore derivable. So this functor induces a map $\ch_V:\h(\H(G))\to \h(\O(X))$. The proof of Proposition \ref{McCarthy_proposition} (below) will make it clear that in degree zero this map is given by
\[\ch_V:\H(G)/[\H(G),\H(G)] \to \O(X),\quad \ch_V(f)(x)=\Trace\left( V_x \xrightarrow{f} V_x\right),\] the trace of $f\in \H(G)$ as an operator on the fibre $V_x$ over $x\in X$. It therefore seems appropriate to call $\ch_V$ the \emph{character} of $V$. If $V_1\to V_2\to V_3$ is a short exact sequence of admissible $(G,X)$-modules, then the sequence of functors $\Hom_G(\argument,V_1)\to \Hom_G(\argument,V_2)\to \Hom_G(\argument,V_3)$ is exact on $\Proj_f(G)$, and we therefore have $\ch_{V_2}=\ch_{V_1}+\ch_{V_3}$.
\item Compact induction: For each compact open subgroup $K\subset G$ one has a functor $\ind_K^G:\Mod_f(K)\to \Mod_f(G)$ of compact induction \cite[I.3.2]{Bernstein_notes}. The corresponding map $\h(\H(K))\to \h(\H(G))$ is equal to the one induced by the inclusion of algebras $\H(K)\into \H(G)$.
\item Compact restriction: Let $G_c$ denote the union of the compact subgroups of $G$. This is an open, closed, and conjugation-invariant subset of $G$, so its characteristic function $1_{G_c}$ lies in $\Class^\infty(G)$. The corresponding idempotent endomorphism of $\h(\H(G))$ (and $\h(\Mod_f(G))$) will be called \emph{compact restriction}. The operator $1_{G_c}$ is related to the compact-induction maps $\ind_K^G$, as explained in the introduction.
\end{enumerate}
\end{examples}

Using a construction due to McCarthy \cite{McCarthy}, we shall now give an explicit description of the groups $\h(\Mod_f(G))$ and the isomorphism $\h(\H(G))\cong \h(\Mod_f(G))$. This description is useful for computations involving the map in cyclic homology induced by a functor $\functor{F}:\Mod_f(G)\to \Mod_f(G')$, in cases where $\functor{F}$ restricts to a functor between the subcategories of finitely generated projectives. Note that the functors in Examples \ref{Keller_examples}(1)--(5) all have this property. (For parabolic induction, this is a corollary of Bernstein's second adjoint theorem \cite{Bernstein_Second_Adjoint}.)

%
%
%

Before stating the results, let us establish some notation. An (associative, complex) algebra $A$ is \emph{locally unital} if for every finite subset $S\subset A$, there exists an idempotent $e\in A$ such that $es=se=s$ for each $s\in S$. All (left) modules $V$ over $A$ will be assumed to be \emph{nondegenerate}, i.e., to satisfy if $V=AV$. Our main example is, of course, the Hecke algebra $\H(G)$, whose nondegenerate modules are precisely the smooth representations of $G$. 

To each locally unital algebra $A$ we associate a precyclic module $C(A)$, and Hochschild and cyclic homology groups $\HH_*(A)$ and $\HC_*(A)$, as usual (see \cite{Loday}; note that $A$ is $H$-unital, so the ``naive'' definitions suffice). For $A=\H(G)$, one has $C_n(\H(G))\cong \H(G^{n+1})$; explicit formulas for the structure maps in this picture are given in \cite{Blanc-Brylinski}. The algebra $\Class^\infty(G)$ acts on $\H(G^{n+1})$ according to the formula
\begin{equation}\label{Class_cyclic_equation} (F f)(g_0,\ldots,g_n) = F(g_0\cdots g_n) f(g_0,\ldots,g_n)\end{equation} for $F\in\Class^\infty(G)$ and $f\in \H(G^{n+1})$.

Now let $\mathcal A$ be a small category enriched over $\C$-vector spaces. Following Mitchell \cite[\S 17]{Mitchell} and McCarthy \cite[\S2.1]{McCarthy}, we define a precyclic module by letting $C_n(\mathcal A)$ be the vector space
\[ \bigoplus_{(A_0,\ldots,A_n)\in \mathcal A^{n+1}} \Hom(A_0,A_1)\otimes_{\C}\Hom(A_1,A_2) \otimes_{\C}\cdots\otimes_{\C}\Hom(A_n,A_0).\] 
The structure maps are defined by
\[ d_i(f_0\otimes\cdots\otimes f_n)=\begin{cases} f_0\otimes\cdots\otimes f_{i+1} f_{i}\otimes\cdots\otimes f_n&\text{if }0\leq i<n,\\
f_0 f_n\otimes f_1\otimes\cdots\otimes f_{n-1}&\text{if }i=n\end{cases}\]
\[t(f_0\otimes\cdots\otimes f_n)=f_n\otimes f_0\otimes\cdots\otimes f_{n-1}.\]
The associated Hochschild and cyclic homology groups will be denoted $\HH_*^s(\mathcal A)$ and $\HC_*^s(\mathcal A)$. We let $\h^s(\mathcal A)$ denote the $\SBI$ long exact sequence. The superscripts $s$ (for ``split'') are used to distinguish this construction from the more elaborate ones in \cite{McCarthy} and \cite{Keller}. Each functor $\mathcal A\to \mathcal A'$ induces a map of precyclic modules $C(\mathcal A)\to C(\mathcal A')$, and thus maps in homology $\h^s(\mathcal A)\to \h^s(\mathcal A')$.

\begin{example}\label{McCarthy_example} Let $A$ be a unital algebra, and let $(\star,A^{\opp})$ be the category with one object $\star$, having $\End(\star)=A^{\opp}$ (the algebra opposite to $A$). Then $C(\star,A^{\opp})\cong C(A)$, the standard precyclic module associated to $A$. Let $\Proj_f(A)$ denote the category of finitely generated, projective left modules over $A$. This category is not small, but it admits a small skeleton, and we define $C(\Proj_f(A))$ in terms of such a skeleton. There is a covariant inclusion $\functor{I}:(\star,A^{\opp})\to \Proj_f(A)$, sending $\star$ to $A$, and $a\in A^{\opp}$ to the endomorphism $a'\mapsto a'a$ of $A$. McCarthy has shown that this inclusion induces isomorphisms in Hochschild and cyclic homology \cite[Proposition 2.4.3]{McCarthy}: the inverse is given by combining the trace maps $\h(\End P)\to \h(A)$ associated to the various $P\in \Proj_f(A)$.
\end{example}  

We will extend McCarthy's result to locally unital algebras. The existence of an isomorphism $\h(A)\cong \h^s(\Proj_f(A))$ in this case is easily established, by considering the embedding of $A$ into its minimal unitalisation $A^+$. We will later need an explicit isomorphism, which we now construct. 

For each idempotent $e\in A$, let $\Proj_f(A,e)\coloneq \{ P\in \Proj_f(A)\ |\ P=AeP\}$, a full subcategory of $\Proj_f(A)$. Define a functor 
$\functor{F}_e:(\star,eAe^{\opp})\to \Proj_f(A,e)$ by setting $\functor{F}_e(\star)=Ae$, and letting $\functor{F}_e(a)\in \End_A(Ae)$ be the operator of right-multiplication by $a$, for each $a\in eAe$. The set of idempotents in $A$ is directed according to the partial order $e\leq f \iff ef=fe=e$, and the inclusion maps $eAe\into A$ and $\Proj_f(A,e)\into \Proj_f(A)$ induce isomorphisms $C(A)\cong \injlim C(eAe)$ and $C(\Proj_f(A))\cong \injlim C(\Proj_f(A,e))$.

\begin{proposition}\label{McCarthy_proposition} Let $A$ be a locally unital algebra, and $\Proj_f(A)$ the category of finitely generated projective $A$-modules. There are isomorphisms
\[ \h(A)\xrightarrow{\cong} \injlim \h^s(\star,eAe^{\opp}) \xrightarrow[\injlim \functor{F}_e]{\cong} \injlim \h^s(\Proj_f(A,e)) \xrightarrow{\cong} \h^s(\Proj_f(A))\] in Hochschild and cyclic homology, compatible with $\SBI$ sequences.
\end{proposition}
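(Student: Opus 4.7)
The plan is to verify the three isomorphisms separately; compatibility with the $\SBI$ sequences will then be automatic, since each arises from a chain map of precyclic modules. For the first isomorphism, the inclusions $eAe \hookrightarrow A$ give chain maps $C(eAe) \to C(A)$ of precyclic modules---they commute with the face, degeneracy, and cyclic operators because multiplication in $eAe$ agrees with multiplication in $A$. Local unitality ensures that every tensor in $C_n(A)$ lies in the image of $C_n(eAe)$ for some $e$ fixing all its factors, so these maps exhibit $C(A)$ as a filtered colimit; since Hochschild and cyclic homology commute with filtered colimits, the standard identification $C(eAe) = C(\star,eAe^{\opp})$ from Example \ref{McCarthy_example} delivers $\h(A)\cong\injlim\h^m(\star,eAe^{\opp})$.

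For each fixed $e$, I would factor $\functor{F}_e$ through a Morita equivalence: McCarthy's theorem (Example \ref{McCarthy_example}) gives $\h(eAe) \cong \h^m(\Proj_f(eAe))$ via the embedding $\star \mapsto eAe$, while $Q \mapsto Ae\otimes_{eAe}Q$ is a $\C$-linear equivalence $\Proj_f(eAe)\simeq\Proj_f(A,e)$ (with quasi-inverse $P\mapsto eP$) sending $eAe$ to $Ae$. Since any equivalence of small $\C$-linear categories induces an isomorphism on $\h^m$, each $\functor{F}_e$ is an isomorphism on cyclic homology. The main obstacle is compatibility with the transition maps: for $e\leq f$, the square of functors does not commute on the nose, because $a\in eAe$ produces endomorphisms $a_e$ of $Ae$ and $a_f$ of $Af$ supported at different objects of $\Proj_f(A,f)$. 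The key is to exploit the splitting $Af = Ae\oplus A(f-e)$: with $i:Ae\hookrightarrow Af$ the inclusion and $p:Af\to Ae$ right multiplication by $e$, one has $pi=\id_{Ae}$ and $a_f=i\,a_e\,p$, so the trace identity $\alpha\beta\sim\beta\alpha$ (with $\alpha=ia_e$ and $\beta=p$) shows $a_f\sim a_e$ in $\HH_0^m(\Proj_f(A,f))$; the same mechanism, applied cyclically in higher degrees and compatibly with the $B$-operator, upgrades this to an equivalence of the induced chain maps on Hochschild and cyclic homology. Thus the $\functor{F}_e$ assemble into a well-defined isomorphism of colimits.

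For the third isomorphism, local unitality together with finite generation ensures that each $P\in\Proj_f(A)$ satisfies $P=AeP$ for any $e$ fixing a finite generating set, so $\Proj_f(A)=\bigcup_e\Proj_f(A,e)$; since $C(\Proj_f(A))$ is a direct sum indexed by tuples of objects, and any finite collection of objects and morphisms already lies in some $\Proj_f(A,e)$, the canonical map $\injlim C(\Proj_f(A,e))\to C(\Proj_f(A))$ is an isomorphism of precyclic modules. Taking Hochschild and cyclic homology yields the third isomorphism, and the compatibility of all three steps with $\SBI$ follows from functoriality of the $\SBI$ sequence under chain maps of precyclic modules.
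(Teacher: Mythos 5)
Your overall strategy coincides with the paper's: the first and third isomorphisms are handled exactly as in the text (both already hold at the level of precyclic modules, by local unitality and finite generation), and your factorisation of $\functor{F}_e$ as McCarthy's inclusion $(\star,eAe^{\opp})\to\Proj_f(eAe)$ followed by the equivalence $Q\mapsto Ae\otimes_{eAe}Q$ is precisely the paper's argument that each $\functor{F}_e$ is an isomorphism. The one place where your write-up stops short of a proof is the step you correctly identify as the main obstacle: compatibility with the transition maps for $e\leq f$. Your degree-zero argument via $p\colon Af\to Ae$, $i\colon Ae\into Af$, $pi=\id_{Ae}$, $a_f=i\,a_e\,p$ and the trace relation is the right idea, but the phrase ``the same mechanism, applied cyclically in higher degrees and compatibly with the $B$-operator'' is an assertion, not an argument; in particular, a chain homotopy built degreewise on the Hochschild complex does not automatically descend to the cyclic complex, and producing one compatible with the cyclic operator is exactly the nontrivial content here. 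The paper closes this gap by packaging the data $(i,p)$ and the relations among them into a single functor $\functor{J}$ from the two-object category $(\mathcal I,eAe^{\opp})$ (all Hom-spaces equal to $eAe^{\opp}$) to $\Proj_f(A,f)$, with $\functor{J}(0)=Af$, $\functor{J}(1)=Ae$ and $\functor{J}(a_{i,j})$ given by right multiplication by $a$; this is a special homotopy in McCarthy's sense, and his Proposition 2.3.3 then yields equality of the two composites on both $\HH_*^m$ and $\HC_*^m$, hence on the whole $\SBI$ sequence. To complete your proof you should either carry out this construction (or an equivalent explicit $S$- and $B$-compatible homotopy) rather than appeal to the degree-zero trace identity alone.
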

For the degree-zero homology of the Hecke algebra of a reductive $p$-adic group, this result was stated in \cite{Dat_K0}.

\begin{proof} We have already observed that the first and third isomorphisms hold at the level of precyclic modules. For each idempotent $e$, the functor $\functor{F}_e$ is the composition
\[ (\star,eAe^{\opp})\xrightarrow{\functor{I}} \Proj_f(eAe) \xrightarrow{\functor{G}} \Proj_f(A,e),\]
where $\functor{I}$ is as in Example \ref{McCarthy_example}, and $\functor{G}(P)=Ae\otimes_{eAe}P$. McCarthy proved that $\functor{I}$ induces an isomorphism in homology. The functor $\functor{G}$ is an equivalence of categories, so $\functor{F}_e$ induces an isomorphism in homology.
It remains to show, for idempotents $e\leq f$, that the diagram
\[ \xymatrix{
\h^s(\star,eAe^{\opp}) \ar[r]^-{\functor{F}_e} \ar[d] & \h^s(\Proj_f(A,e)) \ar[d] \\
\h^s(\star,fAf^{\opp}) \ar[r]^-{\functor{F}_f} & \h^s(\Proj_f(A,f))
}
\]
commutes. We do this by constructing a special homotopy (as defined in \cite[Definition 2.3.2]{McCarthy}) between the two compositions. 

Let $(\mathcal I,eAe^{\opp})$ be the category with two objects, $0$ and $1$, with $\Hom(i,j)=eAe^{\opp}$ for each $i$ and $j$, and with composition of morphisms given by multiplication in $eAe^{\opp}$. For each $a\in eAe$, we write $a_{i,j}$ for the corresponding element of $\Hom(i,j)$. We consider the inclusion functors 
\[\epsilon_i:(\star,eAe^{\opp})\to (\mathcal I,eAe^{\opp}),\quad \epsilon_i(\star)=i,\quad \epsilon_i(a)=a_{i,i}\quad (i=0,1,\ a\in eAe).\]

Now define a functor $\functor{J}:(\mathcal I,eAe^{\opp}) \to \Proj_f(A,f)$ as follows. On objects, $\functor{J}(0)\coloneq Af $ and $\functor{J}(1)\coloneq Ae$. On morphisms, for each $a\in eAe$ we let $\functor{J}(a_{i,j}):\functor{J}(i)\to\functor{J}(j)$ be right-multiplication by $a$. The diagram
\[ 
\xymatrix{
(\star,eAe^{\opp}) \ar[r]^-{\epsilon_0} \ar[d]  & (\mathcal I,eAe^{\opp}) \ar[d]^-{\functor{J}} & (\star,eAe^{\opp}) \ar[l]_-{\epsilon_1} \ar[d]^-{\functor{F}_e} \\
(\star,fAf^{\opp})\ar[r]^-{\functor{F}_f} & \Proj_f(A,f) & \Proj_f(A,e) \ar[l]
}
\]
commutes, and so $\functor{J}$ implements a special homotopy between the two compositions in 
\[ \xymatrix{
(*,eAe^{\opp}) \ar[r]^-{\functor{F}_e} \ar[d] & \Proj_f(eAe) \ar[d] \\
(*,fAf^{\opp}) \ar[r]^-{\functor{F}_f} & \Proj_f(fAf)
}
\]
The induced diagram in homology therefore commutes, by \cite[Proposition 2.3.3]{McCarthy}. Thus the isomorphisms $\functor{F}_e$ assemble into an isomorphism of direct limits.  
\end{proof}

The relevance of Proposition \ref{McCarthy_proposition} to reductive $p$-adic groups is explained by the following proposition, which follows immediately from \cite[Theorem 1.5]{Keller} and the Bernstein decomposition as in Theorem \ref{Keller_theorem}.

\begin{proposition}\label{McCarthy-Keller_proposition}
For each reductive $p$-adic group $G$, there is an isomorphism $\h^s(\Proj_f(G)) \xrightarrow{\cong} \h(\Mod_f(G))$ which is related to the isomorphisms of Theorem \ref{Keller_theorem} and Proposition \ref{McCarthy_proposition} by a commuting diagram
 \[\xymatrix{ \h(\H(G)) \ar[r]^-{\cong} \ar[dr]_-{\cong} & \h^s(\Proj_f(G)) \ar[d]^-{\cong} \\ & \h(\Mod_f(G)).}\]
If $\functor{F}:\Mod_f(G)\to \Mod_f(G')$ is a functor which restricts to an exact functor $\Proj_f(G)\to \Proj_f(G')$, then the diagram
\[ \xymatrix{ \h^s(\Proj_f(G)) \ar[d]_-{\functor{F}} \ar[r]^-{\cong} & \h(\Mod_f(G)) \ar[d]^-{\functor{F}} \\ 
 \h^s(\Proj_f(G')) \ar[r]^-{\cong} & \h(\Mod_f(G)) }
\]
is commutative.\hfill\qed
\end{proposition}

%

\section{Comparing geometric and spectral operators}\label{operators_section}

Throughout this section we consider the Hecke algebra $\H(G)$ of a reductive group $G$ over a $p$-adic field $F$. We refer to \cite{Bernstein_notes} and \cite{Renard} for the general theory and terminology.

The operator $1_{G_c}:\h(\H(G))\to \h(\H(G))$ of compact restriction was defined in Examples \ref{Keller_examples}(6). The image of this idempotent on Hochschild and cyclic homology, and on the $\SBI$ long exact sequence, will be denoted by $\HH_*(\H(G))_c$, $\HC_*(\H(G))_c$ and $\h (\H(G))_c$, respectively. The image of the complementary idempotent $1_{G_{nc}}=1-1_{G_c}$ will be denoted by $\HH_*(\H(G))_{nc}$, etc. 


Higson and Nistor \cite{Higson-Nistor} and Schneider \cite{Schneider} have shown that the compact-restriction operator interacts with the maps in the $\SBI$ sequence according to the formulas
\begin{equation}\label{SBI_relations} 1_{G_c} B= B 1_{G_c} = 0 \qquad\text{and}\qquad 1_{G_c} S= S {1_{G_c}} = S.
\end{equation}
It follows immediately from these formulas that $1_{G_c}$ acts as the identity on the periodic cyclic homology of $\H(G)$; see \cite{Julg-Valette1}, \cite{Julg-Valette2}, \cite{Blanc-Brylinski}, \cite{Higson-Nistor} and \cite{Schneider} for details and historical background. The following lemma lists some more consequences of the relations \eqref{SBI_relations}. Recall that by a map $\h(\H(G))\to \h(\H(G'))$ we mean a pair of graded linear maps $\HH_*(\H(G))\to \HH_*(\H(G'))$ and $\HC_*(\H(G))\to \HC_*(\H(G))$ commuting with the maps $S$, $B$ and $I$.

\begin{lemma}\label{SBI_lemma}
Let $G$, $G'$ and $G''$ be reductive $p$-adic groups, and suppose $T:\h (\H(G)) \to \h (\H(G'))$ and $T':\h (\H(G')) \to \h (\H(G''))$ are maps of $\SBI$ sequences.
\begin{enumerate}
\item The inclusion of cochain complexes
\[ (\HH_*(\H(G))_c, 0)\into (\HH_*(\H(G)), B I)\]
is a quasi-isomorphism.
\item $T\left(\HC_*(\H(G))_c\right)\subseteq \HC_*(\H(G'))_c$.
\item $(T' T)_c = T'_c T_c$, where $T_c=1_{G'_c} T 1_{G_c}$, and so on. 
\item $[T,1_{G_c}]^2=0$ on $\HC_*(\H(G))$, and $[T,1_{G_c}]^3=0$ on $\h (\H(G))$.
\end{enumerate}
\end{lemma}

\begin{proof} Parts (1), (2) and (3) follow from the relations \eqref{SBI_relations} by routine diagram-chases in the $\SBI$ sequence. In part (4), one has by (2) and (3) that
\[ [T,1_{G_c}]^2 = TT_c - T 1_{G_c} T - T_c^2 + T_c T,\] which vanishes on $\HC_*(\H(G))$ because $T1_{G_c}=T_c$. Multiplying once more by $[T,1_{G_c}]$ and applying (3) several times, one finds that $[T,1_{G_c}]^3=0$ on $\h(\H(G))$. 
 \end{proof}

\begin{corollary} 
The operators $E_c\coloneq 1_{G_c} E 1_{G_c}$, for $E$ ranging over the set of minimal idempotents in $\Centre(G)$, form a family of pairwise-orthogonal idempotents on $\h (\H(G))$, with $\sum E_c = 1_{G_c}$.\hfill\qed
\end{corollary}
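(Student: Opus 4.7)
My plan is to verify the three required properties in turn, with the bulk of the work going into the idempotency of each $E_c$.

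The sum identity $\sum_E E_c = 1_{G_c}$ is formal: by the Bernstein decomposition the minimal idempotents satisfy $\sum_E E = \id$ as a locally finite sum of endomorphisms of $\h(\H(G))$, so $\sum_E E_c = 1_{G_c}\bigl(\sum_E E\bigr)1_{G_c} = 1_{G_c}^2 = 1_{G_c}$.

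The idempotency of $E_c$ is the heart of the matter. Setting $c = [E,1_{G_c}]$, a direct expansion using $E^2 = E$ and $1_{G_c}^2 = 1_{G_c}$ produces the identity $E_c^2 - E_c = 1_{G_c}\,c^2$. On $\HC_*(\H(G))$, Lemma \ref{SBI_lemma}(4) already supplies $c^2 = 0$, and we are done. On $\HH_*(\H(G))$ only the weaker $c^3 = 0$ is guaranteed, and this is the main obstacle. To get around it I would combine the $\HC$-statement with the $\SBI$ structure: because $c$ is an endomorphism of the $\SBI$ sequence it commutes with $I:\HH_* \to \HC_*$, so the vanishing of $c^2$ on $\HC_*$ forces $I\circ c^2 = c^2\circ I = 0$ on $\HH_*$. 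Hence the image of $c^2|_{\HH_*}$ lies in $\ker I = \image(B)$ by $\SBI$-exactness, and the relation $1_{G_c} B = 0$ from \eqref{SBI_relations} kills this image. So $1_{G_c}\,c^2 = 0$ on $\HH_*$ as well.

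For pairwise orthogonality, let $E, E'$ be distinct minimal idempotents of $\Centre(G)$. Then $EE' = E'E = 0$, so $E + E'$ is again an idempotent and $(E+E')_c = E_c + E'_c$ is idempotent by what I just proved. Expanding $(E_c + E'_c)^2 = E_c + E'_c$ and using $E_c^2 = E_c$, $(E'_c)^2 = E'_c$ yields the anticommutation $E_c E'_c + E'_c E_c = 0$. Setting $X = E_c E'_c$, one computes $XE'_c = E_c(E'_c)^2 = X$ and $E'_c X = (E'_c E_c)E'_c = -XE'_c = -X$; applying $E'_c$ once more gives $(E'_c)^2 X = E'_c X = -X$ on one hand and $E'_c(E'_c X) = E'_c(-X) = X$ on the other, so $X = -X$ and therefore $E_c E'_c = 0$.

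The single nonformal ingredient in the above is the bootstrap from $c^2 = 0$ on $\HC_*$ to $1_{G_c}\,c^2 = 0$ on $\HH_*$, using the $\SBI$ exactness together with $1_{G_c} B = 0$; this is the step I expect to be the main obstacle. Everything else is routine manipulation with orthogonal idempotents, relying only on the fact that sums of orthogonal idempotents are idempotent and that a pair of anticommuting idempotents in a $\C$-algebra must be orthogonal.
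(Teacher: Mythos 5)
Your argument is correct, but it takes a longer road than the paper intends. The corollary is stated as a formal consequence of Lemma \ref{SBI_lemma}, and the intended route is part (3) of that lemma: applying $(T'T)_c = T'_c T_c$ with $T=E$ and $T'=E'$ (both are maps $\h(\H(G))\to\h(\H(G))$) gives $E'_c E_c = (E'E)_c$ in one stroke, so idempotency ($E=E'$) and pairwise orthogonality ($E'E=0$ for distinct minimal idempotents) follow immediately, with the sum identity as you state it. You instead bypass part (3): your identity $E_c^2-E_c = 1_{G_c}[E,1_{G_c}]^2$ is correct (I checked the expansion), Lemma \ref{SBI_lemma}(4) disposes of $\HC_*$, and your bootstrap on $\HH_*$ --- $[E,1_{G_c}]^2$ commutes with $I$, hence lands in $\ker I = \image B$ by exactness, which $1_{G_c}$ annihilates by \eqref{SBI_relations} --- is exactly the kind of diagram chase that proves part (3) in the first place. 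Your orthogonality step, deducing anticommutation from idempotency of $(E+E')_c$ and then using the characteristic-zero trick that anticommuting idempotents are orthogonal, is also valid (note $E+E'$ is again an idempotent endomorphism of $\h(\H(G))$, so Lemma \ref{SBI_lemma}(4) applies to it). So the proposal is sound; what the paper's route buys is economy --- multiplicativity of the compression $T\mapsto T_c$ gives both conclusions at once without the anticommutation argument --- while your version has the merit of isolating the one nonformal ingredient, namely $1_{G_c}B=0$ together with $\SBI$-exactness.
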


One may use the idempotents $E_c$ to decompose $\HH_*(\H(G))_c$ as a direct sum of ``Bernstein components''. Identifying $\HH_*(\H(G))_c$ with chamber homology, one finds that these components coincide with the ones studied by Baum, Higson and Plymen in \cite{Baum-Higson-Plymen}. This decomposition is studied further in \cite{Crisp_Bernstein}.

The remainder of this section studies the commutation relations between $1_{G_c}$ and other operators coming from representation theory. We begin with the very simple case where $G$ is a torus.

\subsection*{Tori}

First let $\Lambda$ be a free abelian group of finite rank, and let $\Psi$ be the complex torus $\Hom(\Lambda,\C^\times)$. The Fourier transform gives an isomorphism of algebras $\H(\Lambda)\cong \mathcal O(\Psi)$, and a corresponding isomorphism in Hochschild homology $\HH_*(\H(\Lambda))\cong \HH_*(\mathcal O(\Psi))\cong \Omega^*(\Psi)$, the second isomorphism being the Hochschild-Kostant-Rosenberg theorem \cite{HKR}. The complex $\Omega^*(\Psi)$ decomposes into eigenspaces for the translation action of $\Psi$, and we let $\Pi_0$ denote the projection onto the subspace of invariant forms. 

A straightforward computation shows that the diagram 
\begin{equation}\label{harmonic_equation}
\xymatrix{ \HH_*(\H(\Lambda)) \ar[r]^-{\cong} \ar[d]_-{1_{\Lambda_{c}}} & \Omega^*(\Psi) \ar[d]^-{\Pi_0} \\
\HH_*(\H(\Lambda)) \ar[r]^-{\cong} & \Omega^*(\Psi)
}
\end{equation}
commutes. So Lemma \ref{SBI_lemma}(1) in this case is the well-known quasi-isomorphism between the de Rham complex of $\Psi$ and the subcomplex of invariant forms.

Now let $T$ be a torus over the $p$-adic field $F$, with maximal compact subgroup $T^\circ$. The quotient $\Lambda\coloneq T/T^\circ$ is a free abelian group, and the dual group $\Psi=\Hom(\Lambda,\C^\times)$ of unramified characters is a complex torus. The Hochschild homology of $\H(T)$ is given by 
\begin{equation}\label{torus_equation} \HH_*(\H(T))\cong \bigoplus_{\widehat{T^\circ}} \HH_*(\H(\Lambda)) \cong \bigoplus_{\widehat{T^\circ}} \Omega^*(\Psi),\end{equation}
where $\widehat{T^\circ}$ is the set of smooth homomorphisms $T^\circ\to \C^\times$. See \cite[Proposition 4.7]{Blanc-Brylinski} for details, and for the corresponding description of cyclic homology. 

The minimal idempotents $E\in\Centre(T)$ are parametrised by $\widehat{T^\circ}$; the idempotent $E_\sigma$ associated to $\sigma\in\widehat{T^\circ}$ acts by projecting $\HH_*(\H(T))$ onto the corresponding copy of $\Omega^*(\Psi)$ in \eqref{torus_equation}. The compact-restriction operator $1_{T_c}$, on the other hand, acts on each summand $\Omega^*(\Psi_T)$, as described in \eqref{harmonic_equation}. 

\subsection*{Compact restriction and Jacquet restriction}

Let $P=MN$ be a parabolic subgroup of the reductive group $G$, and let $\pres=\pres^G_M$ be the corresponding Jacquet restriction functor. We choose and fix a compact open subgroup $K\subset G$ such that $G=NMK$. Writing $K_M$ for $K\cap M$, and $K_N$ for $K\cap N$, we normalise the Haar measures on $M$, $N$ and $K$ so that $\vol_K(K)=\vol_M(K_M)=\vol_N(K_N)=1$. With these choices, we have
\begin{equation}\label{KMN_measure_equation} \int_G f(g)\, \dd g = \int_K \int_M \int_N f(nmk) \delta_P(m)\, \dd n\, \dd m\, \dd k\end{equation}
for each $f\in \H(G)$. Here $\delta_P$ denotes the modular function of $P$, characterised by $\dd(pq)=\delta(q)\dd p$ for any left Haar measure $\dd p$ on $P$.

\begin{proposition}\label{Jacquet_proposition} The Jacquet-restriction map $\pres :\h (\H(G))\to \h (\H(M))$ is the one induced by the map of precyclic modules
$\Phi:\H(G^{q+1})\to \H(M^{q+1})$,
\begin{align*}
\Phi(f)&(m_0,\ldots,m_q) =\\
& \delta_P^{1/2}(m_0\cdots m_q) \int_{K^{q+1}}\int_{N^{q+1}} f(k_0^{-1} n_0 m_0 k_1, \ldots, k_q^{-1} n_q m_q k_0) \, \dd n\, \dd k
\end{align*}
where $n=(n_0,\ldots,n_q)$ and $k=(k_0,\ldots,k_q)$.
\end{proposition}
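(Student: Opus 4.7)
The plan is to realise Jacquet restriction as tensor product with an explicit bimodule, use Proposition \ref{McCarthy_proposition} to replace the algebra homology with that of an endomorphism category of projective modules, and then compute directly in this tractable setting.

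First, realise $\pres^G_M$ as the functor $V\mapsto B\otimes_{\H(G)}V$, where $B=C_c^\infty(N\backslash G)$ is the natural $(\H(M),\H(G))$-bimodule with right $\H(G)$-structure given by right translation of $N$-cosets and left $\H(M)$-structure given by left translation twisted by the appropriate power of $\delta_P$ dictated by the normalisation of Jacquet restriction. Fix a ``good'' compact open $K\subseteq G$ with the Iwasawa-type factorisation $G=NMK$, normalise Haar measures as in \eqref{KMN_measure_equation}, and set $e_K=\vol(K)^{-1}\mathbf{1}_K$ and $e_{K_M}=\mathbf{1}_{K_M}$. The Jacquet-lemma identification $(\H(G))_N\cong C_c^\infty(N\backslash G)$ combined with the Iwasawa bijection $N\backslash G/K\cong M/K_M$ produces an explicit $\H(M)$-linear isomorphism
\[\beta_K\colon \pres(\H(G)e_K)\xrightarrow{\;\cong\;}\H(M)e_{K_M}\]
given by $N$-integration with a $\delta_P$ twist, whose $\H(M)$-equivariance reduces to the change-of-variables formula $d(mnm^{-1})=\delta_P(m)\,dn$.

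Next, under $\beta_K$, the endomorphism ``right-convolve by $f$'' of $\H(G)e_K$ (for $f\in e_K\H(G)e_K$) transports to right-convolution on $\H(M)e_{K_M}$ by an explicit element $\tilde f\in e_{K_M}\H(M)e_{K_M}$, obtained from $f$ by $N$- and $K$-averaging together with a $\delta_P^{1/2}$ twist that arises from the Jacobian $\delta_P(m)^{-1}$ in the Iwasawa volume decomposition \eqref{KMN_measure_equation}. Applying this term by term to a pure tensor $f_0\otimes\cdots\otimes f_q\in(e_K\H(G)e_K)^{\otimes(q+1)}$ and then running the Morita trace of Proposition \ref{McCarthy_proposition} back to $\H(M^{q+1})$, the cyclic structure of the precyclic module forces the $K$-integration variables to link consecutive factors in the pattern $k_i^{-1}n_im_ik_{i+1}$ of the stated formula, and the cumulative twist $\delta_P^{1/2}(m_0\cdots m_q)$ appears as the product of the $q+1$ per-factor contributions. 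Since Hochschild and cyclic homology are generated by classes with representatives in $e_K\H(G)e_K$ as $K$ varies over a sufficiently large family of compact opens, this computation determines the map $\pres$ on all of $\h(\H(G))$.

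The main technical obstacle is the careful bookkeeping of the modular and Jacobian factors from \eqref{KMN_measure_equation} throughout these identifications, and checking that the chain-level formula obtained from $\pres$ via $\beta_K$ agrees with $\Phi$ on the nose (rather than merely modulo Hochschild boundaries, which would suffice only for homology classes). Once this is in place, the identification with the map of \cite{Nistor} and the corollary $\pres\,1_{G_c}=1_{M_c}\,\pres$ follow immediately by direct inspection of the explicit formulas involved.
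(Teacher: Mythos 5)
Your overall strategy --- realising $\pres$ on explicit projective modules and pushing the resulting chains back through the trace isomorphism of Proposition \ref{McCarthy_proposition} --- is the same as the paper's, but the reduction you rely on has a genuine gap. The isomorphism $\beta_K\colon\pres(\H(G)e_K)\cong\H(M)e_{K_M}$ via $N\backslash G/K\cong M/K_M$ needs the Iwasawa factorisation $G=NMK$, which forces $K$ to be a special \emph{maximal} compact subgroup; such $K$ cannot be shrunk. But chains whose entries lie in $e_K\H(G)e_K$ for such $K$ do not generate $\h(\H(G))$: the direct limit in Proposition \ref{McCarthy_proposition} runs over arbitrarily small idempotents $e_J$, and a function that is bi-invariant under a small compact open $J$ (for instance $e_J$ itself) is in general bi-invariant under no maximal compact subgroup. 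Nor does the inclusion $e_K\H(G)e_K\into\H(G)$ induce a surjection in homology --- already in degree zero it only reaches the Bernstein components admitting nonzero $K$-fixed vectors. So the family of compact opens for which your $\beta_K$ exists is not cofinal, and your computation determines $\pres$ only on the image of the $K$-spherical subalgebras, not on all of $\h(\H(G))$. A symptom of the loss of generality: if every $f_i$ is $K$-bi-invariant, the $K^{q+1}$-integration in $\Phi$ is redundant, so your reduced setting cannot see the part of the formula that matters in general.

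The paper's proof fixes one good $K$ with $G=NMK$ but lets the chain have level $J\subseteq K$ arbitrarily small. Then $\pres(\H(G/J))\cong\H(N\backslash G/J)$ is finitely generated projective over $\H(M)$ but is \emph{not} of the form $\H(M)e$: it is generated by the finite-dimensional space $\H(K_N\backslash K/J)$, and the transported endomorphism $\pres(f_i)$ is no longer right convolution by a single element of $\H(M)$. One must therefore evaluate McCarthy's trace with an explicit system of maps $\alpha_i,\beta_i$ indexed by representatives $\kappa_i$ of $K_N\backslash K/J$, and the resulting sum over multi-indices, weighted by the volumes $\vol_K(K_N\kappa_i J)$, is precisely what produces the $K^{q+1}$-integral and the $\delta_P^{1/2}$ factor in $\Phi$. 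To repair your argument you would have to carry out this finite-rank trace computation at arbitrary level $J$ --- which is essentially the paper's proof; the $\beta_K$ shortcut is available only at maximal level, where it does not suffice.
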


\begin{remarks}\label{Jacquet_remarks}\begin{enumerate}
\item The  map $\Phi$ appears in \cite{Nistor}, under the name $\inf_M^P\ind_P^G$. Nistor proposes that this map should be considered an analogue, in Hochschild homology, of the parabolic induction functor. Proposition \ref{Jacquet_proposition} makes this analogy precise.
\item Van Dijk proves in \cite{van_Dijk} that for each admissible representation $V$ of $M$, one has $\ch_{\pind_M^G V}=\ch_V \Phi$ as maps $\HH_0(\H(G))\to \C$. Since characters separate points in degree-zero Hochschild homology \cite[Theorem 0]{Kazhdan}, van Dijk's result establishes Proposition \ref{Jacquet_proposition} in degree zero. This was observed by Dat in \cite[Proposition 1.10]{Dat_K0}.
\end{enumerate}
\end{remarks}

\begin{proof}[Proof of Proposition \ref{Jacquet_proposition}]
Let $f_0\otimes\cdots\otimes f_q\in C_q(\H(G))$, and find a compact, open, normal subgroup $J\subseteq K$ such that each $f_i$ is $J$-bi-invariant. Right-convolution by $f_i$ defines an endomorphism of the finitely generated, projective, left $G$-module $C_c^\infty(G/J)$. Integration over $N$ gives an $M$-equivariant isomorphism $\pres\left(C_c^\infty(G/J)\right)\xrightarrow{\cong} C_c^\infty(N\backslash G/J)$, with $M$ acting on $C_c^\infty(N\backslash G/J)$ by
\[ (mf)(NgJ)\coloneq \delta_P^{-1/2}(m) f(Nm^{-1}gJ).\] To lighten the notation, we will write $R\coloneq C_c^\infty(N\backslash G/J)$.

Applying the functor $\pres$ to the endomorphisms $f_i$ gives
\[ \pres(f_i)\in \End_M(R),\qquad \pres(f_i)(f)(NgJ)=\int_G f(NghJ)f_i(h^{-1})\, \dd h.\]
Our goal is to show that the isomorphism $\h(\Proj_f(M))\xrightarrow{\phi} \h(\H(M))$ of Proposition \ref{McCarthy_proposition} sends the class of $\pres(f_0)\otimes\cdots\otimes \pres(f_q)\in C_q(\Proj_f(M))$ to the class of $\Phi(f_0\otimes\cdots\otimes f_q)\in C_q(\H(M))$: i.e., that the diagram
\[
 \xymatrix{
 \h(\Proj_f(G)) \ar[d]_-{\pres} \ar[r]^-{\cong} & \h(\H(G)) \ar[d]^-{\Phi} \\
 \h(\Proj_f(M)) \ar[r]^-{\phi} & \h(\H(M))
 }
\]
commutes.

Adapting McCarthy's construction of the isomorphism $\phi$ to the locally unital setting, one finds that $\phi(\pres(f_0)\otimes\cdots\otimes \pres(f_q))$ may be described as follows. Find an idempotent $e\in \H(M)$ such that $R=\H(M)eR$. Then find $\alpha_i\in\Hom_M(R,\H(M)e)$ and $\beta_i\in\Hom_M(\H(M)e,R)$ (for $i$ ranging over some finite set), such that $\sum_i \beta_i\alpha_i=1_{R}$; this is possible because $R$ is finitely generated and projective. We then have
\begin{equation}\label{Jacquet_proof_eq} \begin{aligned} &\phi\left( \pres(f_0)\otimes\cdots\otimes\pres(f_q)\right) = \\
&\sum_{(i_0,\ldots,i_q)} \alpha_{i_1}\pres(f_0)\beta_{i_0}(e)\otimes  \cdots \otimes \alpha_{i_q}\pres(f_{q-1})\beta_{i_{q-1}}(e)\otimes \alpha_{i_0}\pres(f_q)\beta_{i_q}(e),\end{aligned}\end{equation}
the equality holding in $\h(\H(M))$. (See \cite[Proof of 2.4.3]{McCarthy}.)

The Iwasawa decomposition $G=NMK$ ensures that the module $R$ is generated over $\H(M)$ by the finite-dimensional subspace $S\coloneq C_c^\infty(K_N\backslash K/J)$ of functions supported on $NK$. Choose a compact, open subgroup $L\subset K_M$ which acts trivially on $S$, and let $e=e_L\in \H(M)$ be the normalised characteristic function of $L$. It is immediate from our choices that $R=\H(M)eR$.

Choose representatives $\kappa_1,\ldots,\kappa_d\in K$ for the double-coset space $K_N\backslash K/J$, and for each $i$ let $\chi_i\in S$ denote the characteristic function of $K_N \kappa_i J$. Define $M$-equivariant maps $\alpha_i$ and $\beta_i$ by
\[ \alpha_i: R\to \H(M)e,\qquad \alpha_i f(m)\coloneq \delta_P^{1/2}(m) f(Nm\kappa_i J),\]
\[ \beta_i: \H(M)e\to R,\qquad \beta_i f(Nmk J)\coloneq \delta_P^{-1/2}(m) \int_{K_M} f(ml)\chi_i(l^{-1}k)\, \dd l.\]
For each $f\in R$, 
\begin{align*} \sum_{i=1}^d \beta_i\alpha_i(f)(NmkJ) & = \sum_i \int_{K_M} f(Nml\kappa_i J)\chi_i(l^{-1}k)\, \dd l\\ 
& = f(Nmk J) \int_{K_M} \sum_i \chi_i(l^{-1}k)\, \dd l\\
&=f(NmkJ).\end{align*}
The second equality holds because $l^{-1}k\in K_N\kappa_i J \Rightarrow l\kappa_i\in K_N k J \Rightarrow Nml\kappa_i J=Nmk J$. The third equality holds because $\sum_i\chi_i$ is identically $1$ on $K$, and $\vol(K_M)=1$. 

The function $e\in\H(M)$ is supported on $K_M$, and satisfies $e*\chi_i=\chi_i$ for each $i$. It follows that 
\begin{equation}\label{beta_equation} \beta_i(e)(NmkJ) = \delta_P^{-1/2}(m)\int_{K_M} e(ml)\chi_i(l^{-1}k)\, \dd l = \begin{cases} \chi_i(mk)& \text{if }m\in K_M,\\ 0 & \text{otherwise.}\end{cases}\end{equation}
A straightforward computation combining \eqref{KMN_measure_equation} and \eqref{beta_equation} shows that
\[ [\alpha_j\pres(f)\beta_i](e)(m)=\delta_P^{1/2}(m)\vol_K(K_N\kappa_i J) \int_N f(\kappa_i^{-1} n m \kappa_j)\, \dd n\] for all $f\in C_c^\infty(J\backslash G/ J)$ and $m\in M$.

Finally, writing $v_i\coloneq \vol_K(K_N\kappa_i J)$, we have
\begin{align*}
&\delta_P^{-1/2}(m_0\cdots m_q)\Phi(f_0\otimes\cdots\otimes f_q)(m_0,\ldots,m_q) \\
&= \sum_{(i_0,\ldots, i_q)} (v_{i_0} \cdots v_{i_q})\, \int_{N^{q+1}} f_0(\kappa_{i_0}^{-1} n_0 m_0 \kappa_{i_1})\cdots f_q(\kappa_{i_q}^{-1} n_q m_q \kappa_{i_0})\, \dd n \\
& = \sum_{(i_0,\ldots, i_q)} [\alpha_{i_1} \pres(f_0) \beta_{i_0}](e)(m_0)\cdots [\alpha_{i_0}\pres(f_q) \beta_{i_q}](e)(m_q)\\
&= \delta_P^{-1/2}(m_0\cdots m_q) \phi(\pres(f_0)\otimes\cdots\otimes \pres(f_q))(m_0,\ldots,m_q)\end{align*} 
as required.
\end{proof}

Proposition \ref{Jacquet_proposition} allows us to interpret some results of Nistor \cite{Nistor} in terms of the Jacquet functor $\pres$. Recall that $\Class^\infty(G)$ denotes the algebra of locally constant functions on $G$, with pointwise multiplication. Restriction of functions gives a map $\Class^\infty(G)\to\Class^\infty(M)$, which we use to view $\h(\H(M))$ as a module over $\Class^\infty(G)$.

\begin{corollary}\label{Nistor_corollary1} (cf. \cite[Lemma 6.3]{Nistor}) The map $\pres:\h (\H(G))\to \h (\H(M))$ is $\Class^\infty(G)$-linear.\hfill\qed
\end{corollary}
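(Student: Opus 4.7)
The plan is to verify the stronger, chain-level identity $\Phi(Ff) = (F|_M)\Phi(f)$ in $\H(M^{q+1})$ for every $F \in \Class^\infty(G)$ and $f \in \H(G^{q+1})$, where $\Phi$ is the explicit map of Proposition \ref{Jacquet_proposition} inducing $\pres$ in homology. Substituting \eqref{Class_cyclic_equation} into the integrand for $\Phi(Ff)(m_0,\ldots,m_q)$, one sees that $F$ is evaluated on the telescoping product
\[k_0^{-1} n_0 m_0 k_1 \cdot k_1^{-1} n_1 m_1 k_2 \cdots k_q^{-1} n_q m_q k_0 = k_0^{-1}(n_0 m_0 n_1 m_1 \cdots n_q m_q) k_0,\]
so conjugation-invariance of $F$ removes the outer $k_0$'s. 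Using $m N m^{-1} = N$ to push each $m_i$ rightward past the subsequent $n_j$'s rewrites the remaining product as $n' \cdot m$, with $m = m_0 m_1 \cdots m_q$ and $n' = n'(\mathbf n;\mathbf m) \in N$. The problem is then reduced to showing $F(n'm) = F(m)$.

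The key observation is that this equality holds pointwise whenever $m$ lies in the subset $U \subset M$ where $1 - \Ad_m$ is invertible on $\mathrm{Lie}(N)$: for such $m$, the map $N \to N$, $y \mapsto ymy^{-1}m^{-1}$, is bijective (its differential at the identity is $1 - \Ad_m$), so every element of $Nm$ is $N$-conjugate---hence $G$-conjugate---to $m$. Applied inside the integrals defining $\Phi$, this yields
\[\Phi(Ff)(\mathbf m) = F(m)\,\Phi(f)(\mathbf m) = (F|_M \cdot \Phi(f))(\mathbf m)\]
for every $\mathbf m$ with $m_0 \cdots m_q \in U$.

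To finish, $U$ is Zariski-open in $M$ and nonempty---it contains any element of the split component of $M$ that acts without the eigenvalue $1$ on $\mathrm{Lie}(N)$---and is therefore $p$-adically open and dense; its preimage under the product map $M^{q+1} \to M$ is open and dense in $M^{q+1}$. Since $\Phi(Ff)$ and $(F|_M)\Phi(f)$ are both locally constant, agreement on this open dense subset forces agreement everywhere, giving the desired chain-level identity.

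The step deserving the most care is the bijectivity of $y \mapsto ymy^{-1}m^{-1}$ on $U$: it is not a group homomorphism, so the conclusion requires the inverse function theorem over $F$ together with a global argument exploiting that $N$ is unipotent. This is standard $p$-adic Lie theory, but is the only non-formal ingredient in the argument.
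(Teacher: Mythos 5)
Your proof is correct, and its overall shape is the paper's: Corollary \ref{Nistor_corollary1} is obtained there by combining Proposition \ref{Jacquet_proposition} with Nistor's chain-level statement (his Lemma 6.3) that the explicit map $\Phi$ intertwines the $\Class^\infty$-actions of \eqref{Class_cyclic_equation}; what you do differently is to reprove that chain-level identity directly rather than cite it. Your reduction is exactly right: the arguments of $F$ telescope to $k_0^{-1}(n_0m_0\cdots n_qm_q)k_0$, conjugation-invariance removes $k_0$, and $M$-normalisation of $N$ rewrites the product as $n'm$ with $m=m_0\cdots m_q$, so everything rests on the pointwise identity $F(n'm)=F(m)$. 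Your route to it---first on the locus $U$ where $1-\Ad_m$ is invertible on $\mathrm{Lie}(N)$, where $Nm$ is a single $N$-conjugacy orbit because $y\mapsto ymy^{-1}m^{-1}$ is onto $N$, and then everywhere by Zariski-density of $U$ and local constancy of both sides of the asserted identity in $\H(M^{q+1})$---is valid, but it leans on the nontrivial (if standard) global bijectivity lemma you flag, plus the density argument. Both can be avoided: for any $m\in M$ and $n'\in N$ pick $a$ in the split centre of $M$ with $|\alpha(a)|<1$ for all roots $\alpha$ of $\mathrm{Lie}(N)$, so that $a^k(n'm)a^{-k}=(a^kn'a^{-k})m\to m$ as $k\to\infty$; conjugation-invariance and local constancy of $F$ then give $F(n'm)=F(m)$ for every $m$, and the chain-level identity $\Phi(Ff)=(F|_M)\Phi(f)$ follows pointwise with no regularity hypothesis, no inverse-function-theorem input, and no density step. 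So your argument buys self-containedness at the cost of the hardest lemma in the neighbourhood, while the contraction trick (essentially what underlies Nistor's lemma) gives the same identity in two lines.
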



In particular, considering the function $1_{G_c}\in \Class^\infty(G)$ we obtain:

\begin{corollary}\label{Jacquet_compact_corollary} $\pres  1_{G_c} = 1_{M_c} \pres$, as maps $\h (\H(G))\to \h (\H(M))$.\hfill\qed
\end{corollary}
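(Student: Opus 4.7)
The plan is to deduce this as an immediate consequence of the stronger Corollary \ref{Nistor_corollary1}. That corollary asserts the $\Class^\infty(G)$-linearity of $\pres$, where $\Class^\infty(G)$ acts on $\h(\H(M))$ via the restriction-of-functions homomorphism $\Class^\infty(G)\to\Class^\infty(M)$. Applying this linearity to the idempotent $1_{G_c}\in\Class^\infty(G)$ reduces the claim to verifying the equality of scalar functions $1_{G_c}|_M = 1_{M_c}$ on $M$, i.e., the set-theoretic identity $G_c\cap M = M_c$.

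The inclusion $M_c\subseteq G_c\cap M$ is trivial, since a compact subgroup of $M$ is in particular a compact subgroup of $G$. For the reverse inclusion, suppose $m\in M$ lies in some compact subgroup $K$ of $G$. Then the cyclic group $\langle m\rangle$ has compact closure in $G$, and since $M$ is closed in $G$ (as the $F$-points of a closed $F$-subvariety), that closure is in fact contained in $M$; it is therefore a compact subgroup of $M$ containing $m$, so $m\in M_c$.

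I do not foresee any real obstacle. The whole representation-theoretic content has been packaged into Corollary \ref{Nistor_corollary1}, and all that remains is the elementary topological observation $G_c\cap M = M_c$.
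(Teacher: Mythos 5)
Your proof follows exactly the paper's route: the corollary is stated there as an immediate consequence of Corollary \ref{Nistor_corollary1}, applied to the function $1_{G_c}\in\Class^\infty(G)$, with the identity $G_c\cap M=M_c$ left implicit. Your explicit verification of that identity (using that $M$ is closed in $G$, so the compact closure of $\langle m\rangle$ lies in $M$) is correct and fills in the only unstated detail.
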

In degree zero, this has also been observed by Dat \cite[Lemme 2.6]{Dat_Idempotents}.

For the group $G=\SL_2(F)$, and its diagonal subgroup $M$, Nistor has explicitly computed the kernel of the map $\Phi:\HH_*(\H(G))\to \HH_*(\H(M))$ \cite[Proposition 7.4]{Nistor}; among other things, it is shown that $\Phi$ is injective on $\HH_1(\H(G))$. Nistor also proves that the image of $\Phi$ is contained in the space of invariants of the Weyl group of $M$. These results, combined with Proposition \ref{Jacquet_proposition}, give: 

\begin{corollary}\label{pres_SL2_corollary} (cf. \cite[Proposition 7.4]{Nistor}) Let $G=\SL_2(F)$, and let $M$ be the subgroup of diagonal matrices. Let $\pind$ and $\pres$ denote the Jacquet functors with respect to the parabolic subgroup $P$ of upper-triangular matrices, and let $\opind$ and $\opres$ denote the Jacquet functors for the opposite parabolic $\overline{P}$ of lower-triangular matrices.
\begin{enumerate}
\item The map $\pres :\HH_1(\H(G))\to \HH_1(\H(M))$ is injective.
\item $\pres =\opres$ as maps $\h (\H(G))\to \h (\H(M))$.
\item $\pind=\opind$ as maps $\HH_1(\H(M))\to \HH_1(\H(G))$.
\end{enumerate}
\end{corollary}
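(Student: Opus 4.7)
The plan is to transport each of the three claims through Proposition \ref{Jacquet_proposition}. That proposition identifies the homology map induced by $\pres$ with the explicit geometric map $\Phi$, which---after matching formulas and normalisations---coincides with Nistor's map denoted $\inf_M^P\ind_P^G$ in \cite{Nistor}. Consequently, every assertion that Nistor proves about his map becomes an assertion about $\pres$.

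For (1), I will cite Nistor's injectivity statement in \cite[Proposition 7.4]{Nistor} for $G=\SL_2(F)$: his map is injective on $\HH_1$, and translating along Proposition \ref{Jacquet_proposition} yields (1). Similarly for (2), Nistor's result that his map is independent of the choice of parabolic with given Levi (equivalently, is symmetric under $P\leftrightarrow\overline{P}$) gives $\pres=\opres$ on all of $\h(\H(G))$.

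For (3), I will derive the equality $\pind=\opind$ on $\HH_1(\H(M))$ from (1) and (2) via the geometric lemma of Bernstein-Zelevinsky recalled in Examples \ref{Keller_examples}(3). Using (2) I can write
\[ \pres(\pind - \opind) = \pres\pind - \opres\opind, \]
and expand both summands via the geometric lemma. For $G=\SL_2(F)$ the double-coset spaces $P\backslash G/P$ and $\overline{P}\backslash G/\overline{P}$ each contain precisely two elements (by the Bruhat decomposition), and each coset contributes the same term in both sums: the trivial coset contributes the identity, while the nontrivial one contributes $\Ad_w$ for the nontrivial Weyl element $w$ (note that $w$ normalises $M$, so the relevant Levi subgroups $M_w$ and $L_w$ all equal $M$). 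Hence $\pres\pind=\opres\opind$, so $\pres(\pind-\opind)=0$, and the injectivity from (1) forces $\pind=\opind$ on $\HH_1(\H(M))$.

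The main obstacle will be the initial formula-matching between $\Phi$ and Nistor's map; this is essentially notational, but one must keep careful track of the modulus factor $\delta_P^{1/2}$ and of the conventions for Haar measure and for the Iwasawa decomposition. Once that identification is in place, the remainder of the argument reduces to citing \cite[Proposition 7.4]{Nistor} and running the Weyl-group bookkeeping for $\SL_2$.
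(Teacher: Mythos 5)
Your proposal is correct and follows essentially the same route as the paper: identify $\pres$ with Nistor's explicit map via Proposition \ref{Jacquet_proposition}, import injectivity and Weyl-invariance from \cite[Proposition 7.4]{Nistor} for parts (1) and (2) (the paper phrases the latter as $\Ad_w\pres=\pres$ combined with the functor isomorphism $\Ad_w\pres\cong\opres$), and deduce (3) from the geometric lemma together with the injectivity of $\pres$ on $\HH_1$. The only cosmetic difference is in (3), where the paper expands the mixed composite directly as $\pres\pind=1+\Ad_w=\pres\opind$ instead of first replacing $\pres$ by $\opres$ via (2).
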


\begin{proof} Part (1) is an immediate consequence of the cited result of Nistor and Proposition \ref{Jacquet_proposition}. Nistor's result and Proposition \ref{Jacquet_proposition} also show that $\Ad_w \pres = \pres$, where $w$ denotes the nontrivial element of the Weyl group of $M$. Since $\Ad_w\pres \cong \opres$ as functors, this proves part (2). For part (3), we use the geometric lemma to write
\[ \pres  \pind  = 1 + \Ad_w = \pres  \opind .\] In view of part (1), this shows that $\pind =\opind$ on $\HH_1$.
\end{proof}

The equality $\pind=\opind$ does not hold in general---for example, it doesn't hold on $\HH_0$ when $G=\SL_2$, see Proposition \ref{Iwahori_commutator_proposition}. We do not know whether the equality $\pres=\opres$ is valid for all reductive groups $G$ and Levi subgroups $M$; thanks to the referee for raising this question. We do  have the following partial result. Let $\h(\H(G))_{\reg}$ and $\h(\H(M))_{\reg}$ denote the localisations of the cyclic homology of $\H(G)$ and $\H(M)$ at the respective subsets of regular semisimple elements, as defined in \cite[Definition 3.4]{Blanc-Brylinski}. By \cite[Proposition 3.6]{Blanc-Brylinski}, one has embeddings $\h(\H(G))_{\reg}\into \h(\H(G))$ and $\h(\H(M))_{\reg}\into \h(\H(M))$.

\begin{proposition}\label{r_ropp_proposition}
Let $G$ be a reductive $p$-adic group, and let $M\subset G$ be a Levi factor of a parabolic subgroup $P$. The Jacquet restriction map $\pres$ maps $\h(\H(G))_{\reg}$ into  $\h(\H(M))_{\reg}$, and the restriction of $\pres$ to $\h(\H(G))_{\reg}$ does not depend on the choice of parabolic $P$.
\end{proposition}

\begin{proof}
We first note that Jacquet restriction maps $\h(\H(G))_{\reg}$ into $\h(\H(M))_{\reg}$: indeed, Proposition \ref{Jacquet_proposition} reduces this claim to the assertion that if $mn\in P=MN$ is a regular semisimple element of $G$, then $m$ is a regular semisimple element of $M$. This last statement is true, because \cite[Lemma 22 (i)]{HC-vD} implies that there is an element $\nu\in N$ such that $m=\nu mn\nu^{-1}$.

Now let $T$ be a maximal torus in $M$. We fix a Haar measure on $T$, and then specify invariant measures on $G/T$ and $M/T$ by $d_G=d_{G/T} d_T$ and $d_M=d_{M/T}d_T$. Let $I_T^G:\h (\H(G))_{\reg}\to \h (\H(T))_{\reg}$ and $I_T^M:\h (\H(M))_{\reg}\to \h (\H(T))_{\reg}$ be the \emph{higher orbital integrals} corresponding to these measures (see \cite[Proposition 4.2]{Blanc-Brylinski}). Define a function $D_{G/M}:T_{\reg}\to \R$ by
 \[ D_{G/M}(t)\coloneq \left |\det_{\mathfrak g/\mathfrak m}(\Ad(t)-\id) \right|\] where $\mathfrak g$ and $\mathfrak m$ are the respective Lie algebras of $G$ and $M$, and $|\argument|$ is the absolute value on $F^\times$. The function $D_{G/M}$ is locally constant on $T_{\reg}$, so it induces an endomorphism of $\h (\H(T))_{\reg}$. One shows that the diagram 
\[
 \xymatrix{
 \h(\H(G))_{\reg} \ar[r]^-{\pres} \ar[d]_-{I_T^G} & \h(\H(M)) \ar[d]^-{I_T^M} \\
 \h(\H(T))_{\reg} \ar[r]^-{D_{G/M}} & \h(\H(T))_{\reg}
 }
\]
 commutes, using the equality $\pres=\Phi$ from Proposition \ref{Jacquet_proposition}, a change of variables as in \cite[Lemma 9]{van_Dijk} and \cite[Lemma 5.5]{Kottwitz}, and the observation that $d_N(ntn^{-1}t^{-1})=|\det_{\mathfrak n}(\Ad(t)-\id)|d_N(n)$ for all $t\in T_{\reg}$ (see \cite[Lemma 22 (ii)]{HC-vD}). Neither $D_{G/M}$ nor $I_G^T$ depends on the choice of parabolic subgroup containing $M$, so the same is true of the map $I_T^M\circ \pres$. As $T$ ranges over the set of all maximal tori in $M$, the maps $I_T^M$ separate the points of $\h(\H(M))_{\reg}$, and so it follows that $\pres$ itself is independent of $P$.
\end{proof}

\subsection*{Compact restriction, central idempotents, and Clozel's formula}

Dat proves in \cite[Proposition 2.8]{Dat_Idempotents} that the compact-restriction operator $1_{G_c}$ commutes with the idempotents $E\in \Centre(G)$ on the degree-zero Hochschild homology of $\H(G)$, for every reductive $p$-adic group $G$. We conjecture that this commutation property holds also in higher homology:

\begin{conjecture}\label{commutation_conjecture} 
Let $G$ be a reductive $p$-adic group. Compact restriction $1_{G_c}$ commutes with all idempotents $E\in\Centre(G)$, as operators on the Hochschild and cyclic homology of $\H(G)$.
\end{conjecture}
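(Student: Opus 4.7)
My plan is to induct on the semisimple rank of $G$, reducing the general conjecture to a commutation at the level of a single Bernstein block, and then to a compatibility with parabolic induction and Jacquet restriction. First, since every idempotent in $\Centre(G)$ is a locally finite sum of minimal central idempotents $E_\mathfrak{s}$ indexed by inertial equivalence classes $\mathfrak{s}=[M,\sigma]_G$, it suffices to prove $[E_\mathfrak{s},1_{G_c}]=0$ for each such $\mathfrak{s}$. The base case $G=T$ a torus is handled explicitly: diagrams \eqref{harmonic_equation} and \eqref{torus_equation} show that the minimal idempotents $E_\sigma$ and the compact-restriction operator $1_{T_c}$ both act diagonally on the decomposition $\h(\H(T))\cong \bigoplus_{\widehat{T^\circ}} \Omega^*(\Psi)$, hence commute.

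For the inductive step, fix $\mathfrak{s}=[M,\sigma]_G$ with $M\subsetneq G$. Combining Corollary \ref{Jacquet_compact_corollary} with the consequence of the geometric lemma in Examples \ref{Keller_examples}(3) that $\pres$ sends the $\mathfrak{s}$-block of $G$ into a finite direct sum of blocks of $M$ (indexed by Weyl-group translates of the cuspidal datum), together with the inductive hypothesis applied to $M$, one deduces $\pres[E_\mathfrak{s},1_{G_c}]=0$ on $\h(\H(G))$. The remaining task is to use Bernstein's second adjoint theorem to lift this vanishing from $\h(\H(M))$ back to $\h(\H(G))$: every projective in $\Mod_f(G)$ lying in the $\mathfrak{s}$-block is a direct summand of some parabolically induced $\pind Q$ with $Q\in \Proj_f(M)$, so by Proposition \ref{McCarthy_proposition} the commutator on the block is controlled by its behaviour on parabolically induced classes.

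The main obstacle is that parabolic induction does not commute with $1_{G_c}$, as Dat observed using Clozel's formula; thus the lifting step above is more subtle than a direct application of $\pind$. A complete proof would require an extension of Clozel's formula to higher homology for general Levi subgroups, measuring $[\pind,1_{G_c}]$ as a concrete error term supported on specific cuspidal families, together with a Weyl-group symmetrization argument showing that this error cancels after composition with $\pres$ and projection by $E_\mathfrak{s}$. The paper accomplishes exactly this for $G=\SL_2(F)$ (Theorem \ref{SL2_Clozel_theorem} and Corollary \ref{SL2_induction_corollary}), where the failure of $[\pind,1_{G_c}]$ is confined to the unramified principal series block and cancels against the central idempotent. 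Extending that computation to arbitrary reductive $G$—presumably via an orbital-integral argument in the spirit of Clozel's original work—is the hard analytic and combinatorial step that I expect to be the chief technical challenge.
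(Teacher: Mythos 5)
The statement you are attempting is stated in the paper only as a conjecture (Conjecture \ref{commutation_conjecture}): the paper proves it for tori and for $G=\SL_2(F)$ (Theorem \ref{SL2_commutation_theorem}), and on $\HH_n$ for split groups of rank $n$, but not in general, and your proposal does not close the gap either --- it ends by deferring to an ``extension of Clozel's formula to higher homology,'' which is itself Conjecture \ref{Clozel_conjecture} of the paper, not a theorem. Beyond that, there are two concrete holes in the reduction you set up. First, your induction only treats $\mathfrak{s}=[M,\sigma]_G$ with $M\subsetneq G$; for cuspidal blocks $[G,\sigma]_G$ every proper Jacquet restriction kills the block, so neither your base case nor your inductive step says anything about $[E_{\mathfrak{s}},1_{G_c}]$ there. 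Second, even for non-cuspidal blocks, the identity $\pres\,[E_{\mathfrak{s}},1_{G_c}]=[E_M,1_{M_c}]\,\pres=0$ (which is indeed the right first move, via Corollary \ref{Jacquet_compact_corollary} and the compatibility of central idempotents with $\pres$) yields $[E_{\mathfrak{s}},1_{G_c}]=0$ only if $\pres$ is injective on the relevant homology. That injectivity is the crucial input in the paper's $\SL_2$ argument, supplied by Nistor's computation (Corollary \ref{pres_SL2_corollary}(1)) together with the Blanc--Brylinski reduction to degrees $0$ and $1$; it certainly fails in general (already on $\HH_0$, where the cuspidal part is invisible to $\pres$), so some substitute --- the paper suggests higher orbital integrals and Shalika germs --- is mandatory and is missing from your argument.

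Your proposed substitute, lifting the vanishing back through $\pind$ via the second adjoint theorem and Proposition \ref{McCarthy_proposition}, runs directly into the obstruction you yourself name: $[\pind,1_{G_c}]\neq 0$ already for $\SL_2(F)$ in degree zero (Corollary \ref{SL2_induction_corollary} and Proposition \ref{Iwahori_commutator_proposition}), and controlling this commutator for general Levi subgroups is not a technical afterthought but essentially the open problem itself. In the cases the paper can handle, it avoids this lifting entirely: either it works where $\pres$ is injective, or it invokes the (conjectural) higher Clozel formula, from which the commutation follows by Dat's degree-zero argument verbatim. As written, your plan therefore establishes nothing beyond the special cases already in the paper, and its ``chief technical challenge'' is the conjecture in disguise rather than a step of a proof.
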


Note that the commutator $[E,1_{G_c}]$ is certainly nilpotent, by Lemma \ref{SBI_lemma}. Conjecture \ref{commutation_conjecture} is true when $G$ is a torus, since in that case $1_{G_c}$ and $E$ already commute on $C(\H(G))$. The conjecture also holds in the next-simplest case:

\begin{theorem}\label{SL2_commutation_theorem} Let $G=\SL_2(F)$. Compact restriction $1_{G_c}$ commutes with all idempotents $E\in\Centre(G)$, as operators on the Hochschild and cyclic homology of $\H(G)$.
\end{theorem}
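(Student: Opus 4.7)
The plan is to reduce the statement to Hochschild homology in degrees at most $1$, handle degree zero by Dat's theorem, and attack degree one by combining the injectivity of Jacquet restriction (Corollary \ref{pres_SL2_corollary}(1)) with a compatibility between the Bernstein centre and the Jacquet functor. First I would observe that each Bernstein block of $\H(\SL_2(F))$ is Morita equivalent to an algebra of global dimension at most one: supercuspidal blocks are Morita equivalent to $\C$, while principal series blocks correspond to affine Iwahori--Hecke algebras of type $\tilde A_1$, which are hereditary. Hence $\HH_n(\H(G))=0$ for all $n\geq 2$, and the $\SBI$ sequence gives $S:\HC_n\to\HC_{n-2}$ injective for $n\geq 2$. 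Combined with $1_{G_c}S=S1_{G_c}=S$ from \eqref{SBI_relations}, this forces $1_{G_c}$ to act as the identity on $\HC_n$ for all $n\geq 2$, so $[E,1_{G_c}]=0$ there trivially. The $\SBI$ sequence also identifies $\HC_0\cong \HH_0$ and $\HC_1\cong \HH_1/B(\HC_0)$, so the remaining content sits in $\HH_0$ and $\HH_1$; the degree-zero case is \cite[Proposition 2.8]{Dat_Idempotents}.

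For degree one, Corollary \ref{pres_SL2_corollary}(1) tells us that $\pres:\HH_1(\H(G))\to \HH_1(\H(M))$ is injective, where $M$ is the diagonal torus, so it is enough to show $\pres[E,1_{G_c}]=0$. I would combine two identities. The first is $\pres 1_{G_c}=1_{M_c}\pres$ from Corollary \ref{Jacquet_compact_corollary}. The second is an identity $\pres E = E_M\pres$ as maps $\h(\H(G))\to \h(\H(M))$, where $E_M\in\Centre(M)$ is the sum of the central idempotents of $M$ corresponding to the Bernstein blocks of $M$ that cover the block of $E$ in $G$. This second identity reflects the compatibility of the Bernstein decompositions of $\Mod(G)$ and $\Mod(M)$ with the Jacquet functor: for each $V\in\Mod_f(G)$ the $\mathfrak s$-isotypic submodule $V_\mathfrak s = EV$ satisfies $\pres V_\mathfrak s = E_M\pres V$ inside $\pres V$. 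By the functoriality of Keller's construction (Corollary \ref{Keller_corollary}) this lifts to the claimed identity of maps on $\h$. Combining the two identities,
\[ \pres[E,1_{G_c}] = E_M 1_{M_c}\pres - 1_{M_c}E_M\pres = [E_M,1_{M_c}]\pres,\]
which vanishes by the torus computation in the ``Tori'' subsection: in the decomposition $\HH_*(\H(M))\cong\bigoplus_{\widehat{T^\circ}}\Omega^*(\Psi)$ both $E_M$ and $1_{M_c}$ act diagonally and therefore commute.

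The main obstacle is the identity $\pres E = E_M\pres$: one has to verify that the natural transformation of $\pres$ coming from $E\in\Centre(G)$ is induced by a genuine central element of $\Centre(M)$. For $\SL_2(F)$ this can be checked by hand from the explicit description of the Bernstein blocks---supercuspidal blocks are annihilated by $\pres$, and each principal series block maps under $\pres$ into at most two Weyl-related blocks of the torus---but it is the one structural input needed beyond Dat's theorem and the general formalism developed in the earlier sections.
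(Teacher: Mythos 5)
Your proposal is correct and follows essentially the same route as the paper: reduce to Hochschild degrees $0$ and $1$, invoke Dat in degree zero, and in degree one use the identity $\pres E = E_M\pres$ (the paper cites \cite[2.4]{Bernstein-Deligne-Kazhdan} for exactly this), Corollary \ref{Jacquet_compact_corollary}, the torus case, and the injectivity of $\pres$ on $\HH_1$ from Corollary \ref{pres_SL2_corollary}. The only divergence is the reduction step: the paper quotes Blanc--Brylinski's explicit computation ($\HH_n(\H(G))=0$ for $n\geq 2$ together with the $S$ and $I$ isomorphisms), whereas you derive it from the blocks having global dimension at most one; this is fine in substance, but citing \cite{Blanc-Brylinski} directly would be safer than the blanket inference ``hereditary $\Rightarrow \HH_n=0$ for $n\geq 2$'', which requires bimodule smoothness of the blocks rather than just global dimension.
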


\begin{proof}
The Hochschild-to-cyclic spectral sequence (see \cite[2.1.7]{Loday}) allows us to deduce commutativity on cyclic homology from commutativity on Hochschild homology. Blanc and Brylinski prove in \cite[Section 6]{Blanc-Brylinski} that $\HH_n(\H(G))= 0$ for $n\geq 2$, and so given Dat's result for degree-zero homology we are left to prove the asserted commutation on $\HH_1(\H(G))$. Let $\pres$ denote Jacquet restriction to the diagonal subgroup $M\subset G$. For each idempotent $E\in \Centre(G)$, there is an idempotent $E_M\in \Centre(M)$ such that $\pres  E = E_M\pres$ as functors \cite[2.4]{Bernstein-Deligne-Kazhdan}. We have
\[ \pres[E,1_{G_c}] = [E_M, 1_{M_c}]\pres\] by Corollary \ref{Jacquet_compact_corollary}, and the commutator $[E_M,1_{M_c}]$ vanishes because $M$ is a torus. Since $\pres$ is injective in degree one (Corollary \ref{pres_SL2_corollary}), we conclude that $[E, 1_{G_c}]=0$ on $\HH_1(\H(G))$.
\end{proof}

The same proof applies to $\HH_n(\H(G))$ for any split group $G$ of rank $n$. We expect that an elaboration of this argument, using the higher orbital integrals and Shalika germs of Blanc-Brylinski \cite{Blanc-Brylinski} and Nistor \cite{Nistor}, will apply to the general case.

We shall now outline a different approach to Theorem \ref{SL2_commutation_theorem} and Conjecture \ref{commutation_conjecture}, following more closely the proof of Dat in degree zero. Once again let $G=\SL_2(F)$, and let $M$ be the diagonal subgroup. Let $\chi\in\Class^\infty(M)$ be the characteristic function of the set $\{m\in M\ |\ \delta_P(m)>1\}$. As above we write $\pres$ for Jacquet restriction from $G$ to $M$ along the upper-triangular subgroup $P$, and we write $\opind$ for parabolic induction from $M$ to $G$ along the lower-triangular subgroup $\overline{P}$.

\begin{theorem}\label{SL2_Clozel_theorem} The following is an equality of operators on the Hochschild and cyclic homology of $\H(G)$, for $G=\SL_2(F)$:
\[ 1_{G_c} + \opind \chi \pres = 1.\]
\end{theorem}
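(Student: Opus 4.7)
The plan is to reduce the identity to the cases of $\HH_0$ and $\HH_1$ via the SBI sequence, invoke Clozel's formula in degree zero, and for degree one use the injectivity of $\pres$ to reduce to a simple calculation on $\h(\H(M))$.

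First I would use Blanc-Brylinski's computation of $\h(\H(\SL_2(F)))$ recalled at the start of the proof of Theorem \ref{SL2_commutation_theorem}: the groups $\HH_n(\H(G))$ vanish for $n\geq 2$, and the cyclic groups are determined from $\HH_0$ and $\HH_1(\H(G))_c$ by the isomorphisms $I:\HH_1(\H(G))_c\xrightarrow{\cong}\HC_1(\H(G))$, $S:\HC_2\xrightarrow{\cong}\HC_0(\H(G))_c$, and the iterated $S$-maps in higher degrees. Each operator in the identity---$1_{G_c}$, $\opind$, multiplication by $\chi_M$, and $\pres$---commutes with the maps $S$, $B$, $I$: the functor-induced maps do so by Corollary \ref{Keller_corollary}, and the $\Class^\infty$-multiplications do so by construction in \cite{Blanc-Brylinski}. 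Thus it suffices to verify the equality on $\HH_0$ and on $\HH_1$. In degree zero this is Clozel's formula, as used by Dat and cited in the introduction.

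It remains to handle $\HH_1$. Apply $\pres:\HH_1(\H(G))\to \HH_1(\H(M))$ to both sides; by Corollary \ref{pres_SL2_corollary}(1) this map is injective, so it suffices to show $\pres(1_{G_c}+\opind\chi_M\pres)=\pres$. I would combine four ingredients: (i) Corollary \ref{Jacquet_compact_corollary}, giving $\pres 1_{G_c}=1_{M_c}\pres$; (ii) the geometric lemma for $\SL_2$ as in the proof of Corollary \ref{pres_SL2_corollary}(3), giving $\pres\opind=1+\Ad_w$ on $\h(\H(M))$; (iii) the identity $\Ad_w\pres=\pres$ used in the proof of Corollary \ref{pres_SL2_corollary}(2); and (iv) the intertwining $\Ad_w\circ\chi_M=\chi_{M_-}\circ\Ad_w$, which follows from the formula \eqref{Class_cyclic_equation} and the fact that conjugation by $w$ sends $M_+=\{\delta_P>1\}$ to $M_-=\{\delta_P<1\}$. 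Together these give
\begin{align*}
\pres(1_{G_c}+\opind\chi_M\pres)
&= 1_{M_c}\pres + (1+\Ad_w)\chi_M\pres\\
&= (1_{M_c}+\chi_{M_+})\pres + \chi_{M_-}\Ad_w\pres\\
&= (1_{M_c}+\chi_{M_+}+\chi_{M_-})\pres \;=\; \pres,
\end{align*}
using the disjoint decomposition $M=M_c\sqcup M_+\sqcup M_-$.

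The main burden is just marshalling the commutation relations listed above; there is no deep obstacle. The only point of real subtlety is the reduction of the cyclic-homology statement to the Hochschild-homology statement, which depends on the rather special structure of $\h(\H(\SL_2(F)))$ computed by Blanc-Brylinski---and the same caveat is the reason this argument does not obviously extend beyond $\SL_2$.
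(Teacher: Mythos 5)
Your proposal is correct and follows essentially the same route as the paper: reduce via the Blanc--Brylinski structure of $\h(\H(\SL_2(F)))$ to degree-zero (Clozel's formula) and degree-one Hochschild homology, then in degree one apply the injective map $\pres$, use Corollary \ref{Jacquet_compact_corollary}, the geometric lemma, the relations $\Ad_w\chi_M=\chi_M^w\Ad_w$ and $\Ad_w\pres=\pres$, and the pointwise identity $1_{M_c}+\chi_M+\chi_M^w=1$ on $M$. The only cosmetic difference is that you spell out the SBI reduction in more detail than the paper, which simply refers back to the proof of Theorem \ref{SL2_commutation_theorem}.
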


\begin{proof} In degree zero, this is Clozel's formula; see \cite{Clozel}, or below. As in the proof of Theorem \ref{SL2_commutation_theorem}, the results of \cite{Blanc-Brylinski} leave us to consider degree-one Hochschild homology. Corollary \ref{Jacquet_compact_corollary} and the geometric lemma imply that
\[ \pres\left( 1_{G_c} + \opind \chi \pres\right) = \left( 1_{M_c} + \chi + \Ad_w\chi\right)\pres.\] As endomorphisms of $C(\H(M))$, $\Ad_w\chi=\chi^w \Ad_w$, where $\chi^w$ is the function $m\mapsto \chi(m^{-1})$. Corollary \ref{pres_SL2_corollary} implies that $\Ad_w\pres=\opres=\pres$ in $\HH_1(\H(G))$, and so
\[ \pres\left(1_{G_c}+\opind\chi\pres\right)=\left(1_{M_c}+\chi +\chi^w\right)\pres.\]
The function $1_{M_c}+\chi+\chi^w$ is identically equal to $1$ on $M$, and so the proposed formula becomes a true equality upon applying $\pres$ to both sides. Since $\pres$ is injective on $\HH_1(\H(G))$, the formula itself holds. 
\end{proof}

We conjecture that Clozel's formula holds in higher homology for all reductive groups. More precisely, let $G$ be a reductive $p$-adic group, and choose a minimal parabolic subgroup $P_0\subset G$ and a Levi decomposition $P_0=L_0 U_0$. Then each parabolic subgroup $P$ containing $P_0$ has a unique Levi decomposition $P=LU$ with $L_0\subset L$. We write $L\leq G$ to indicate that $L$ is a Levi subgroup obtained in this way, and we let $\pres^G_L$ denote Jacquet restriction along $P$, and let $\opind_L^G$ denote parabolic induction along the opposite parabolic $\overline{P}$. For each $L\leq G$, let $A_L$ denote the split component of the centre of $L$. There is a positive integer $n$ such that, for all rational characters $\psi$ of $A_L$, the character $n\psi$ extends to $L$. Let $R_L= \{n\alpha\ |\ \alpha \textrm{ is a root of $A_L$ on $U$}\}$, and define
\[ L^+= \{l\in L\ |\ |\beta(l)|<1\text{ for all }\beta\in R_L\}.\] (See \cite[\S 0.5]{Silberger} or \cite[pp.239--240]{Clozel} for more details on $L^+$.) Let $L_{cz}\subset L$ denote the union of the compact-mod-centre subgroups of $L$, let $L_{cz}^+= L_{cz}\cap L^+$, and let $1_{L_{cz}^+}\in \Class^\infty(L)$ denote the characteristic function of $L_{cz}^+$. 

\begin{conjecture}\label{Clozel_conjecture} 
The following is an equality of operators on $\h (\H(G))$:
\[ \sum_{L\leq G} \opind_L^G 1_{L_{cz}^+} \pres^G_L  = 1.\]
\end{conjecture}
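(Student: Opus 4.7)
The plan is to proceed by induction on the semisimple rank of $G$, following the strategy of Theorem \ref{SL2_Clozel_theorem}. The base case is a torus $T$, for which $\chi_T\equiv 1$, the only Levi is $T$ itself, and $T_{cz}=T$, so the formula reduces to $1=1$. By the Blanc-Brylinski description of $\h(\H(G))$ recalled in the proof of Theorem \ref{SL2_commutation_theorem}, together with Dat's result in degree zero, it suffices to establish the identity on $\HH_n(\H(G))$ for $n\geq 1$.

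For the inductive step I would apply $\pres^G_L$ to both sides for each proper $L\leq G$. The right-hand side becomes $\pres^G_L$. On the left, Corollary \ref{Nistor_corollary1} lets me pull $\pres^G_L$ past the multiplication operators $1_{L'_{cz}}\chi_{L'}$, and the geometric lemma yields
\[ \pres^G_L \opind_{L'}^G = \sum_{w} \pind_{L_w}^L \Ad_w \pres^{L'}_{L'_w}, \]
where $w$ runs over a set of double-coset representatives and $L_w\subseteq L$, $L'_w\subseteq L'$ are the resulting Levi subgroups. After reindexing the double sum over $(L',w)$ by the intermediate Levi $L_w\leq L$ and collecting terms, one hopes to recognise the conjectural operator for $L$ applied to $\pres^G_L$; the inductive hypothesis on $\h(\H(L))$ then collapses everything to $\pres^G_L$. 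This would show that the difference between the two sides of the conjecture is annihilated by $\pres^G_L$ for every proper $L\leq G$, i.e., lies in the ``cuspidal part'' of $\h(\H(G))$.

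It then remains to establish the formula on that cuspidal part. There every term in the sum with $L'\neq G$ is zero, since $\pres^G_{L'}$ vanishes on cuspidal classes, and the equation reduces to the assertion that $1_{G_{cz}}$ acts as the identity. In degree zero this is the classical fact that matrix coefficients of supercuspidal representations are compactly supported modulo the centre, and it underlies the degree-zero version of Clozel's formula used by Dat.

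The main obstacles are twofold. First, the reindexing in the inductive step rests on the combinatorial identity among the Clozel functions $\chi_{L'}$ and the Weyl-group parametrisation of the geometric lemma; tracking modular characters and orbit combinatorics is precisely where Clozel's original argument lives and will need to be lifted to the chain level. More seriously, for $n\geq 1$ the cuspidal-part step requires a new argument that $1_{G_{cz}}$ acts as the identity on the higher cuspidal Hochschild homology of $\H(G)$---a statement that is trivially true for $\SL_2$ because the higher cuspidal part vanishes by Blanc-Brylinski, but which has no evident cyclic-homology proof in general. A plausible route is to combine the higher orbital integrals of Blanc-Brylinski \cite{Blanc-Brylinski} and Nistor \cite{Nistor} with the known support properties of supercuspidal characters, or to exploit Schneider's chamber-homology model of $1_{G_c}\h(\H(G))$.
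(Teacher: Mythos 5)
The statement you are trying to prove is stated in the paper as Conjecture~\ref{Clozel_conjecture}: the paper does not prove it, and only records the known special cases (degree zero, where it is a reformulation of Clozel's formula; $\HH_n(\H(G))$ for split $G$ of rank $n$; and the full statement for $\SL_2(F)$, Theorem~\ref{SL2_Clozel_theorem}). Your sketch is a reasonable strategy outline, but it does not close the conjecture, and two of its steps would fail as written. First, the opening reduction is not available in general: the structural results of Blanc--Brylinski invoked in the proofs of Theorems~\ref{SL2_commutation_theorem} and~\ref{SL2_Clozel_theorem} (vanishing of $\HH_n$ for $n\geq 2$, the isomorphisms $I:\HH_1(\H(G))_c\to\HC_1$ and $S:\HC_2\to\HC_0(\H(G))_c$) are specific to $\SL_2(F)$. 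For a general reductive group the higher Hochschild groups do not vanish, and even if you proved the identity on all of $\HH_*(\H(G))$, a formal $\SBI$ chase only shows that the defect $D=1-\sum_L\opind_L^G 1_{L_{cz}}\chi_L\pres^G_L$ is nilpotent on $\HC_*$, not zero; some further input is needed to pass from $\HH$ to $\HC$.

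Second, the two substantive steps are exactly the open content. The ``reindexing'' of the geometric-lemma terms, in which the conjugated restrictions of the functions $1_{L'_{cz}}\chi_{L'}$ are supposed to reassemble into the operators $1_{L''_{cz}}\chi_{L''}$ for Levi subgroups of $L$, is not a formal manipulation: in degree zero Clozel does not argue this way, but instead uses Casselman's character formula for Jacquet modules, i.e.\ an explicit computation of $\opind_L^G$ on $\HH_0$; the expected route in higher degrees (cf.\ the remarks following the conjecture in the paper) is likewise an explicit description of parabolic induction on $\h(\H(G))$, which your sketch does not supply. Finally, the end of your induction is logically incomplete: knowing that $\pres^G_L D=0$ for all proper $L$ only places the \emph{image} of $D$ in the cuspidal part, and verifying the formula on cuspidal classes then gives $D^2=0$, not $D=0$. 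In the $\SL_2$ proof this gap is closed by the injectivity of $\pres$ on $\HH_1$ (Corollary~\ref{pres_SL2_corollary}), i.e.\ by the vanishing of the higher cuspidal part, which fails for general $G$ (cuspidal Bernstein blocks of a group with noncompact centre have nonzero higher homology). The missing assertion---that $1_{G_{cz}}$ acts as the identity on the higher cuspidal homology---is itself an unproved piece of the conjecture, as you acknowledge; until that and the recombination step are supplied (for instance via the higher orbital integrals of Blanc--Brylinski and Nistor, or a higher analogue of Casselman's theorem), the proposal remains a plan rather than a proof.
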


\begin{remarks}
\begin{enumerate}
\item As Dat points out in \cite[(2.1)]{Dat_Idempotents}, the conjectured formula in degree-zero homology is a reformulation of Clozel's formula \cite[Proposition 1]{Clozel} (and is therefore true). 
\item The conjectured equality holds on $\HH_n(\H(G))$ for every split group $G$ of rank $n$: this follows from results of \cite{Blanc-Brylinski} and the geometric lemma, as in the proof of Theorem \ref{SL2_Clozel_theorem}.
\item The validity of Conjecture \ref{Clozel_conjecture} would imply that of Conjecture \ref{commutation_conjecture}: Dat's proof of this fact in degree zero \cite[Proposition 2.8]{Dat_Idempotents} carries over verbatim to higher homology. 
\item Clozel's proof of \cite[Proposition 1]{Clozel} relies on a formula of Casselman \cite[Theorem 5.2]{Casselman_Jacquet} for the character of a Jacquet module, which amounts to a determination of the maps $\opind_L^G$ on degree-zero Hochschild homology. We expect that an explicit description of parabolic induction in higher homology will likewise yield a proof of Conjecture \ref{Clozel_conjecture}. 
\item Clozel's formula has a natural analogue for $G$ an affine Weyl group: see Section \ref{Weyl_section} for the statement and proof. 
\end{enumerate}
\end{remarks}

\subsection*{Compact restriction and parabolic induction for $\SL_2(F)$}

As an application of Theorem \ref{SL2_Clozel_theorem}, we compute the commutator of compact restriction and parabolic induction for $G=\SL_2(F)$ and its Levi subgroup $M$ of diagonal matrices. Dat uses Clozel's formula in \cite[Remarque, p.77]{Dat_Idempotents} to show that these operators do not commute in degree-zero homology. A similar argument, using Theorem \ref{SL2_Clozel_theorem}, shows that these operators do commute in higher homology, and that the failure to commute in degree zero is confined to a single Bernstein component: 

\begin{corollary}\label{SL2_induction_corollary} Let $M$ be the diagonal subgroup of $G=\SL_2(F)$. Using the notation of Theorem \ref{SL2_Clozel_theorem},
\begin{enumerate}
\item $1_{G_c}\pind - \pind 1_{M_c} = \left(\pind -\opind \right)\chi$ as maps $\h (\H(M))\to \h (\H(G))$.
\item $\pind \left(\h (\H(M))_c\right)\subseteq \h (\H(G))_c$.
\item $1_{G_c}\pind = \pind 1_{M_c}$ on $\HH_n(\H(M))$ and $\HC_n(\H(M))$ for every $n\geq 1$.
\item Let $E_1\in \Centre(M)$ be the minimal idempotent associated to the trivial character of $M^\circ$. Then $1_{G_c}\pind = \pind 1_{M_c}$ on $(1-E_1)\h(\H(M))$.
\end{enumerate}
\end{corollary}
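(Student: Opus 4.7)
The plan is to establish (1) directly from Theorem \ref{SL2_Clozel_theorem}, and then deduce (2), (3), and (4) from it in turn.

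For (1), I would post-compose Clozel's formula $1 = 1_{G_c} + \opind\,\chi_M\,\pres$ on the right with $\pind$ and use the geometric lemma to expand $\pres\,\pind = 1 + \Ad_w$, where $w$ is the nontrivial Weyl element. Two identities then drive the simplification. Since conjugation by $w$ interchanges $P$ and $\bar P$ and $w^2$ is central in $\SL_2(F)$, the map $f \mapsto (g \mapsto f(w^{-1}g))$ implements a natural equivalence of functors $\opind \circ \Ad_w \cong \pind$; by Corollary \ref{Keller_corollary} this gives $\opind\,\Ad_w = \pind$ in homology. Moreover $\chi_M\,\Ad_w = \Ad_w\,\chi_M^w$ with $\chi_M^w(m) = \chi_M(m^{-1})$, because $w$ inverts the diagonal torus of $\SL_2$. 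Combined with the partition $1 = 1_{M_c} + \chi_M + \chi_M^w$ from the proof of Theorem \ref{SL2_Clozel_theorem}, these collapse $\opind\,\chi_M(1 + \Ad_w)$ to $\opind\,\chi_M + \pind\,\chi_M^w$, and a short algebraic rearrangement yields (1).

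Part (2) is then immediate: for $v \in \h(\H(M))_c$ the functions $\chi_M$ and $1_{M_c}$ have disjoint supports, so $\chi_M v = 0$ and (1) reduces to $1_{G_c}\,\pind\, v = \pind\, v$. For part (3), (1) reduces the claim to showing that $(\pind - \opind)\,\chi_M$ annihilates $\HH_n(\H(M))$ and $\HC_n(\H(M))$ for all $n \geq 1$. Corollary \ref{pres_SL2_corollary}(3) gives $\pind = \opind$ on $\HH_1$, and Blanc-Brylinski's vanishing $\HH_n(\H(G)) = 0$ for $n \geq 2$ handles higher Hochschild degrees. For the cyclic homology of the torus $M$ in positive degrees, the Hochschild--Kostant--Rosenberg analysis of the Tori subsection---together with the $\SBI$ sequence and $S$-periodicity---shows that compact restriction acts as the identity on $\HC_n(\H(M))$ for $n \geq 1$; hence $\chi_M$ vanishes there by orthogonality with $1_{M_c}$.

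For (4), parts (1) and (3) reduce the task to proving that $(\pind - \opind)\,\chi_M = 0$ on $(1 - E_1)\,\HH_0(\H(M))$. Since $\chi_M$ is constant on $M^\circ$-cosets, it commutes with every central idempotent $E_\sigma$ and preserves the block decomposition, so it suffices to show $\pind = \opind$ on $E_\sigma\,\HH_0(\H(M))$ for each nontrivial character $\sigma$ of $M^\circ$. This is where I expect the main obstacle. The plan is to exhibit a natural equivalence between the functors $\pind$ and $\opind$ when restricted to the Bernstein block $[M,\sigma]$ for $\sigma \neq 1$, implemented by a suitably normalized standard intertwining operator whose regularity on this block reflects the fact that the reducibility locus of $\SL_2(F)$-parabolic induction is confined to the unramified principal series; Corollary \ref{Keller_corollary} then forces the induced maps on $\HH_0$ to agree on the $\sigma$-block, completing the argument.
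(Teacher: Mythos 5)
Parts (1)--(3) of your argument are essentially the paper's: the identity $\pres\,\pind = 1+\Ad_w$, the relation $\chi_M\Ad_w=\Ad_w\chi_M^w$ together with $\opind\Ad_w=\pind$, and the partition $1_{M_c}+\chi_M+\chi_M^w=1$ give (1) exactly as in the paper; (2) is the same observation that $\chi_M 1_{M_c}=0$; and for (3) your route is a harmless variant (the paper kills $\HH_n(\H(M))$ for $n\geq 2$ at the source and chases the $\SBI$ sequence, whereas you kill $\HH_n(\H(G))$ at the target and argue that $1_{M_c}$ acts as the identity on $\HC_n(\H(M))$ for $n\geq 1$, so that $\chi_M$ acts by zero there; both work).

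Part (4) is where there is a genuine gap. The fact you invoke --- that the reducibility locus of parabolic induction for $\SL_2(F)$ is confined to the unramified principal series --- is false: $\pind(\chi)$ is reducible whenever $\chi^2=1$ with $\chi\neq 1$, and the ramified quadratic characters of $F^\times$ are nontrivial on $M^\circ$, so reducibility does occur inside ramified Bernstein blocks (precisely the blocks you need to handle). So the stated reason for regularity and invertibility of the normalized intertwining operator over the whole unramified twisting torus of such a block collapses, and even setting that aside, you have not shown that a normalization exists making the operator regular and invertible at every point of the block (poles and zeros of the normalizing factors are exactly the issue), which is what you would need to get a natural equivalence $\pind\cong\opind$ of functors on the block and hence, via Corollary \ref{Keller_corollary}, equality of the induced maps. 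The conclusion you are after ($\pind=\opind$ on every component other than the unramified principal series) is true, but the paper obtains it from Kutzko's explicit type-theoretic computations for $\SL_2(F)$ \cite{Kutzko_SL2} (describing $\pind$ and $\opind$ concretely on each block), not from a generic-irreducibility argument; as written, your proof of (4) is incomplete at exactly the step you flag as the main obstacle.
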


(Note that part (2) implies that parabolic induction restricts to a map in chamber homology. This map is computed in \cite{Crisp_Parahoric}.) 

\begin{proof} Theorem \ref{SL2_Clozel_theorem} and the geometric lemma give
\[ 1_{G_c}\pind = (1-\opind \chi \pres)\pind = \pind  - \opind \chi(1+\Ad_w) = \pind(1-\chi^w) - \opind\chi.\]
Since $1_{M_c}+\chi +\chi^w$ is identically equal to $1$ on $M$, we find that
$1_{G_c}\pind - \pind 1_{M_c} = \pind \chi - \opind \chi$, giving (1). Multiplying (1) on the right by $1_{M_c}$ gives $1_{G_c}\pind 1_{M_c} - \pind 1_{M_c} = 0$, which implies (2). To prove part (3), we first note that part (1) and Corollary \ref{pres_SL2_corollary} give the asserted commutation on $\HH_1(\H(M))$. The vanishing on $\HH_n(\H(M))$ for $n\geq 2$ is trivial, because these Hochschild groups themselves are zero, and the vanishing on higher cyclic homology then follows from the exactness of the $\SBI$ sequence.  For part (4), we appeal to Kutzko's calculations in \cite{Kutzko_SL2}, which imply that $\pind=\opind$ on all Bernstein components except the unramified principal series. The function $\chi$ is $M^\circ$-invariant, so it commutes with $E_1$, and now (4) follows from (1).
%
\end{proof}

To complete the picture for $\SL_2(F)$, let us now compute the commutator $1_{G_c}\pind-\pind 1_{M_c}$ on $E_1\HH_0(\H(M))$. By \eqref{torus_equation}, $E_1\HH_0(\H(M))\cong \HH_0(\H(\Lambda))\cong \H(\Lambda)$, where $\Lambda=M/M^\circ\cong \Z$. Let $\lambda\in \Lambda$ be the generator satisfying $\chi(\lambda)=1$. We use the same symbol $\lambda$ to denote the function in $\H(\Lambda)$ taking the value $1$ on this generator, and zero elsewhere; so $\H(\Lambda)$ is isomorphic to the Laurent polynomial ring $\C[\lambda,\lambda^{-1}]$. 

The induction functors $\pind$ and $\opind$ send the Bernstein component $E_1\Mod_f(M)$ into the unramified principal-series component of $\Mod_f(G)$. The latter component is equivalent (cf. \cite{Borel_Iwahori},\cite{Casselman_Iwahori}) to the category $\Mod_f(\mathcal A)$ of finitely generated modules over the Iwahori-Hecke algebra 
$\mathcal A = \lspan_{\C}\{T_w\ |\ w\in W\}$ associated to the affine Weyl group $W=\langle s,t\ |\ s^2=t^2=1\rangle$. Recall that the multiplication in $\mathcal A$ is determined by the rules
\[ T_w T_w'= T_{ww'}\text{ if }\ell(ww')=\ell(w)+\ell(w'),\quad T_s^2=(q-1)T_s+q,\quad T_t^2=(q-1)T_t + q,\] where $\ell$ is the length function on $W$, and $q$ is the cardinality of the residue field of $F$. (We follow the notation of \cite{Kazhdan-Lusztig}; some of the references cited below use different conventions.)  

The degree-zero Hochschild homology group $\HH_0(\mathcal A)=\mathcal A/[\mathcal A,\mathcal A]$ has basis $\{ T_s, T_t, T_{(st)^n} | \ n\geq 0\}$. Computations in \cite{Kutzko_SL2} (a special case of the theory of types and covers \cite{Bushnell-Kutzko_Types}) lead to the following concrete identifications of the maps appearing in Clozel's formula. The induction maps $\pind$ and $\opind$ are the ones induced by the algebra homomorphisms
\[ \pind,\opind:\H(\Lambda)\to \mathcal A,\quad \pind(\lambda)= qT_{ts}^{-1}, \quad \opind(\lambda)= q^{-1}T_{st}.\]
The restriction map is
\[ \pres:\HH_0(\mathcal A) \to \H(\Lambda),\quad \pres(T_s)=\pres(T_t)=q-1,\quad \pres(T_{(st)^n})=q^n(\lambda^n+\lambda^{-n}).\]
The maps $1_{M_c}$ and $\chi$ are given on $\H(\Lambda)$ by
\[ 1_{M_c}(\lambda^n)=\begin{cases} 1&\text{if }n=0,\\ 0&\text{if }n\neq 0,\end{cases} \qquad \chi(\lambda^n)=\begin{cases}\lambda^n & \text{if }n\geq 1,\\ 0 & \text{if }n<1.\end{cases}\]
Putting these formulas into Clozel's shows that the compact restriction operator $1_{G_c}:\HH_0(\mathcal A)\to \HH_0(\mathcal A)$ is given by
\[ 1_{G_c}(T_s)=T_s,\quad 1_{G_c}(T_t)=T_t,\quad 1_{G_c}(T_{(st)^n})=\begin{cases} 1&\text{if }n=0,\\ 0 & \text{if }n\geq 1.\end{cases}\]

Finally, we compute the commutator of compact restriction and parabolic induction:
 
\begin{proposition}\label{Iwahori_commutator_proposition} Consider as above the unramified principal-series component of $G=\SL_2(F)$. The commutator $1_{G_c}\pind - \pind 1_{M_c}:\H(\Lambda)\to \HH_0(\mathcal A)$ is given by 
\[ (1_{G_c}\pind-\pind 1_{M_c})(\lambda^n)= \frac{R_{1,(st)^n}}{q^n(q-1)}\left(q-1-T_s-T_t\right)\] for $n\geq 1$, where $R_{1,(st)^n}=(q-1)(q^{2n-1}-q^{2n-2}+q^{2n-3}-\cdots -1)$.
The commutator vanishes on $\lambda^n$ for $n<1$.
\end{proposition}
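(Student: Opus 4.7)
The plan is: first, rewrite the commutator in a tractable form using Corollary \ref{SL2_induction_corollary}(1); second, confine it to a one-dimensional subspace of $\HH_0(\mathcal A)$ via $\pres$ and a symmetry argument; and third, pin down the scalar by extracting the coefficient of $T_1$ via a Kazhdan-Lusztig expansion.

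For the first step, Corollary \ref{SL2_induction_corollary}(1) says that $1_{G_c}\pind - \pind 1_{M_c} = (\pind - \opind)\chi_M$. On $\lambda^n$ this vanishes when $n < 1$ since $\chi_M(\lambda^n) = 0$, and for $n \geq 1$ it equals $(\pind - \opind)(\lambda^n) = q^n T_{(ts)^n}^{-1} - q^{-n}T_{(st)^n}$ by the tabulated formulas for $\pind$ and $\opind$. I would then show that this element lies in the one-dimensional subspace $\C \cdot (q - 1 - T_s - T_t)$. It is annihilated by $\pres$: the geometric lemma gives $\pres\pind(\lambda^m) = \lambda^m + \lambda^{-m}$, and direct substitution of $\opind(\lambda^m) = q^{-m}T_{(st)^m}$ and $\pres(T_{(st)^m}) = q^m(\lambda^m + \lambda^{-m})$ yields the same value for $\pres\opind$. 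Inspection of the tabulated values of $\pres$ on the basis then shows $\ker(\pres) = \C(T_s - T_t) \oplus \C(q - 1 - T_s - T_t)$. The commutator is furthermore fixed by the automorphism $\sigma\colon T_s \leftrightarrow T_t$ of $\mathcal A$, because the anti-involution $T_w \mapsto T_{w^{-1}}$ acts trivially on $\HH_0(\mathcal A)$ (so $T_{(ts)^n}^{-1} \equiv T_{(st)^n}^{-1} = \sigma(T_{(ts)^n}^{-1})$ in $\HH_0(\mathcal A)$) while $\sigma$ fixes each basis element $T_{(st)^m}$. Since $\sigma$ negates $T_s - T_t$ and fixes $q-1-T_s-T_t$, the commutator must be a scalar multiple $f(n)(q - 1 - T_s - T_t)$.

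To determine $f(n)$, I would compare coefficients of $T_1$. The Kazhdan-Lusztig expansion
\[ T_{(ts)^n}^{-1} = q^{-2n}\sum_{y \leq (st)^n}(-1)^{\ell(y)}R_{y,(st)^n}(q)T_y \]
shows that the coefficient of $T_1$ in $T_{(ts)^n}^{-1}$, as an element of $\mathcal A$, is $q^{-2n}R_{1,(st)^n}(q)$. An induction on length then shows that for every $y > 1$ with $y \leq (st)^n$ the class of $T_y$ in $\HH_0(\mathcal A)$ lies in the span of $\{T_s, T_t, T_{(st)^m} : m \geq 1\}$ with no $T_1$-component: even-length rotations reduce immediately to $T_{(st)^m}$, while for odd-length reflections the cyclic identity
\[ T_{(st)^k s} = (T_sT_t)^k T_s \equiv T_s^2 T_t (T_s T_t)^{k-1} = (q-1)T_{(st)^k} + qT_{t(st)^{k-1}} \]
in $\HH_0(\mathcal A)$, together with its $\sigma$-symmetric analogue for $T_{t(st)^k}$, feeds the induction. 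Consequently the $T_1$-coefficient of the commutator equals $q^{-n}R_{1,(st)^n}(q)$, and equating with $f(n)(q-1)$ yields $f(n) = R_{1,(st)^n}(q)/[q^n(q-1)]$. The stated closed form for $R_{1,(st)^n}(q)$ is obtained by a separate induction from the standard recursion for Kazhdan-Lusztig $R$-polynomials. The main obstacle is the careful bookkeeping of the reduction formulas for $T_{(st)^k s}$ and $T_{t(st)^k}$ in $\HH_0(\mathcal A)$, which is routine but intricate.
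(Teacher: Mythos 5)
Your proposal is correct, and after the shared opening it organizes the key computation differently from the paper. Both arguments begin identically: Corollary \ref{SL2_induction_corollary}(1) reduces the commutator to $(\pind-\opind)\chi_M(\lambda^n)$, which vanishes for $n<1$ and equals $q^nT_{(ts)^n}^{-1}-q^{-n}T_{(st)^n}$ for $n\geq 1$, and both then invoke the Kazhdan--Lusztig expansion \eqref{R_polynomial_equation}. The paper finishes by an inductive reduction of the \emph{entire} sum $q^{-n}\sum_{\ell(w)<2n}(-1)^{\ell(w)}R_{w,(st)^n}T_w$ modulo commutators, which requires tracking all the $R$-polynomials $R_{w,(st)^n}$. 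You instead first pin the class down to the line $\C\,(q-1-T_s-T_t)$: it lies in $\ker\pres$ (since $\pres\pind=1+\Ad_w$ and $\pres\opind(\lambda^m)=q^{-m}\pres(T_{(st)^m})$ give the same value $\lambda^m+\lambda^{-m}$; note the tabulated formula at $n=0$, $\pres(T_1)=2$, is what makes $q-1-T_s-T_t$ lie in the kernel, consistent with $\pres\pind(1)=2$), and it is fixed by $s\leftrightarrow t$; then only the $T_1$-coefficient, hence only $R_{1,(st)^n}$, is needed, together with the qualitative lemma that the classes of $T_y$ for $y\neq 1$ have no $T_1$-component. This trades the paper's quantitative bookkeeping for two structural observations, and the ingredients you use are sound: the needed cyclicity facts are elementary here, since $T_{(st)^n}^{-1}=(T_t^{-1}T_s^{-1})^n$ and $T_{(ts)^n}^{-1}=(T_s^{-1}T_t^{-1})^n$ are cyclic rotations of one another and the odd-length elements are involutions (so your blanket claim that $T_w\mapsto T_{w^{-1}}$ is trivial on $\HH_0(\mathcal A)$, asserted without proof, does hold in this dihedral case; and $\sigma$ fixes $T_{(st)^m}$ only as a class in $\HH_0(\mathcal A)$, which is all you use). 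Your reduction $T_{(st)^ks}\equiv(q-1)T_{(st)^k}+qT_{t(st)^{k-1}}$ and its $\sigma$-analogue are correct and close the induction at $T_s,T_t$. Both proofs defer the closed form of $R_{1,(st)^n}$ to the standard recursion, as does the paper. In short: same skeleton, but your localization via $\ker\pres$ and the diagram symmetry buys a computation involving a single $R$-polynomial, at the cost of a couple of extra (easy) structural lemmas; the paper's route is shorter to state but computationally heavier to carry out.
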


\begin{proof} 
We know from Corollary \ref{SL2_induction_corollary} that 
\[ (1_{G_c}\pind - \pind 1_{M_c})(\lambda^n) = \left(\pind -\opind \right)\chi(\lambda^n)=\begin{cases} q^n T_{(ts)^{n}}^{-1} - q^{-n} T_{(st)^n}
&\text{if }n\geq 1,\\ 0 &\text{if }n<1.\end{cases}\] 
For $n<1$ there is nothing left to prove, so consider $n\geq 1$. We have
\begin{equation}\label{R_polynomial_equation} q^{n}T_{(ts)^{n}}^{-1}- q^{-n}T_{(st)^n} = q^{-n} \sum_{\ell(w) < 2n} (-1)^{\ell(w)} R_{w,(st)^n} T_w,\end{equation}
where the $R_{w,(st)^n}$ are certain polynomials in $q$: see \cite[\S 2]{Kazhdan-Lusztig}. The $R$-polynomials may be computed inductively, using the relations \cite[(2.0.a--c)]{Kazhdan-Lusztig}. Another inductive computation shows that the right-hand side of \eqref{R_polynomial_equation} reduces, modulo commutators, to $\frac{R_{1,(st)^n}}{q^n(q-1)}\left(q-1-T_s-T_t\right)$.     
\end{proof} 


%
%
%
%
%
%
%
%
%

Solleveld has computed the Hochschild and cyclic homology of arbitrary affine Hecke algebras \cite{Solleveld_Hochschild}. It would be interesting to extend the above computations to that general context.

\subsection*{Compact restriction and characters}

Let $M\subset G$ be a Levi subgroup of $G$, let $\Psi$ be the complex torus of unramified characters of $M$, let $M^\circ = \bigcap_{\psi\in \Psi}\ker\psi$, and let $\Lambda$ be the lattice $M/M^\circ$. For each irreducible supercuspidal representation  $\sigma$ of $M$, we consider the admissible $(G,\Psi)$-module $\pi\coloneq \pind_M^G\left(\mathcal O(\Psi)\otimes_{\C} \sigma\right)$. The character of $\pi$, as defined in Examples \ref{Keller_examples}(4), is a map $\ch_\pi:\h(\H(G))\to \h(\mathcal O(\Psi))$. The operator $1_{\Lambda_c}$ acts on $\h(\mathcal O(\Psi))$ via the Fourier isomorphism $\mathcal O(\Psi)\cong \H(\Lambda)$. Recall from \eqref{harmonic_equation} that in Hochschild homology, $1_{\Lambda_c}$ is the projection onto $\Psi$-invariant differential forms.

\begin{proposition}\phantomsection\label{character_proposition} 
\begin{enumerate}
\item One has $1_{\Lambda_c}\ch_\pi=\ch_\pi 1_{G_c}$ as maps $\HH_0(\H(G))\to \HH_0(\mathcal O(\Psi))$.
\item If Clozel's formula in higher homology (Conjecture \ref{Clozel_conjecture}) holds for $M$, then 
$1_{\Lambda_c}\ch_\pi = \ch_\pi 1_{G_c}$
as maps $\h(\H(G))\to \h(\mathcal O(\Psi))$.
\end{enumerate}
\end{proposition}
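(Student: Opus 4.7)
My plan is to reduce the identity $1_{\Lambda_c}\ch_\pi = \ch_\pi 1_{G_c}$ to the analogous identity $\ch_{\sigma_\O}\circ 1_{M_c} = 1_{\Lambda_c}\circ\ch_{\sigma_\O}$ on $\h(\H(M))$, and then to verify this reduced identity using Clozel's formula and supercuspidality of $\sigma$. The reduction relies on Frobenius reciprocity: the functors $\Hom_G(-,\pind_M^G\sigma_\O)$ and $\Hom_M(\pres^G_M(-),\sigma_\O)$ are naturally isomorphic, so Corollary \ref{Keller_corollary} gives $\ch_\pi = \ch_{\sigma_\O}\circ\pres^G_M$ on all of $\h(\H(G))$, extending van Dijk's degree-zero identity noted in Remark \ref{Jacquet_remarks}(2). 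Applying Corollary \ref{Jacquet_compact_corollary} converts both sides of the desired equation into maps that factor through $\pres^G_M$, yielding the claimed reduction.

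To prove the reduced identity on $M$, I would interpose $\ch_{\sigma_\O}\circ 1_{M^\circ}$ and establish two sub-identities: (a) $\ch_{\sigma_\O}\circ 1_{M^\circ} = 1_{\Lambda_c}\circ\ch_{\sigma_\O}$, and (b) $\ch_{\sigma_\O}\circ 1_{M^\circ} = \ch_{\sigma_\O}\circ 1_{M_c}$. Identity (a) holds for any admissible family over $\Psi$ and would be proved by factoring $\ch_{\sigma_\O}$ through the algebra homomorphism $\Delta:\H(M)\to\H(M)\otimes\O(\Psi)$ sending $f$ to the function $m\mapsto f(m)\epsilon_m$, where $\epsilon_m\in\O(\Psi)$ is $\psi\mapsto\psi(m)$. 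The induced map $\Delta_*$ on homology intertwines the $\Class^\infty(\Lambda)$-module structures, where $\Class^\infty(\Lambda)$ acts on $\h(\H(M))$ by pulling class functions back along $M\to\Lambda=M/M^\circ$; since $1_{\Lambda_c}=1_{\{0\}}$ pulls back to $1_{M^\circ}$, (a) follows. In degree zero this is a one-line calculation: $\ch_{\sigma_\O}(f)(\psi) = \int_M f(m)\Theta_\sigma(m)\psi(m)\,dm$, and its constant term as a Laurent polynomial on $\Lambda$ is $\int_{M^\circ}f\Theta_\sigma\,dm = \ch_{\sigma_\O}(1_{M^\circ}f)$, since $\psi|_{M^\circ}=1$.

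For (b) I would apply Clozel's formula for $M$ (Conjecture \ref{Clozel_conjecture}, a theorem of Clozel in degree zero, and the hypothesis of Part (2) in higher degrees). Composing $\ch_{\sigma_\O}$ with $\sum_{L\leq M}\opind_L^M 1_{L_{cz}}\chi_L\pres^M_L=\id$ and using Bernstein's second adjointness $\Hom_M(\opind_L^M-,\sigma_\O)\cong\Hom_L(-,\pres^M_L\sigma_\O)$---which vanishes for proper Levis $L\subsetneq M$ since $\pres^M_L\sigma=0$ for supercuspidal $\sigma$---leaves only the top-level $L=M$ term, giving $\ch_{\sigma_\O}=\ch_{\sigma_\O}\circ 1_{M_{cz}}$. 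A short verification (using the decomposition $M=A_M\cdot M^\circ$, with $A_M$ the split centre and $A_M\cap M^\circ=(A_M)_c$) shows $M^\circ\cap M_{cz}=M_c$, so that $1_{M^\circ}\cdot 1_{M_{cz}}=1_{M_c}$ and (b) follows. I expect the main obstacle to be the higher-degree part of (a): verifying the $\Class^\infty(\Lambda)$-equivariance of $\Delta_*$ at the level of the precyclic modules, using formula \eqref{Class_cyclic_equation} together with the multiplicativity identity $\epsilon_{m_0\cdots m_n}=\epsilon_{m_0}\cdots\epsilon_{m_n}$.
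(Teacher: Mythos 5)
Your proposal is correct and takes essentially the same route as the paper: the same reduction via Frobenius reciprocity and Corollary \ref{Jacquet_compact_corollary} to the cuspidal family on $M$, then your step (a) is exactly the paper's Lemma \ref{character_linearity_lemma} ($\Class^\infty(\Lambda)$-linearity of the character, checked at chain level---you package it through the homomorphism $f\mapsto(m\mapsto f(m)\epsilon_m)$ where the paper computes directly on double-coset functions via the trace map), and your step (b) is the paper's final step using Clozel's formula, second adjointness, supercuspidality, and $M_c=M^\circ\cap M_{cz}$. One minor correction: the decomposition $M=A_M\cdot M^\circ$ you invoke is false in general (e.g.\ for $\GL_2$ it misses elements whose determinant has odd valuation), but it is not needed, since $M_c=M^\circ\cap M_{cz}$ follows from the fact that the centre $Z(M)\cap M^\circ$ of $M^\circ$ is compact, which is how the paper argues.
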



Since Clozel's formula holds for tori, part (2) applies to all principal-series characters. 

The proof of Proposition \ref{character_proposition} will require some preparation. Consider the diagram 
\[ \xymatrix@C=50pt{
\h(\H(G)) \ar[r]^-{\pres} \ar[d]_-{1_{G_c}} & \h(\H(M)) \ar[d]^-{1_{M_c}} \ar[r]^-{\ch_{\mathcal O(\Psi)\otimes\sigma}} & \h(\mathcal O(\Psi))\ar[d]^-{1_{\Lambda_c}}\\
\h(\H(G)) \ar[r]^-{\pres} & \h(\H(M)) \ar[r]^-{\ch_{\mathcal O(\Psi)\otimes\sigma}} & \h(\mathcal O(\Psi))
}\]
whose left-hand square commutes by Corollary \ref{Jacquet_compact_corollary}. Frobenius reciprocity implies that $\ch_\pi$ is equal to the composition of the horizontal arrows, and so we are left to show that the right-hand square commutes. Thus we may, and henceforth do, assume that $M=G$ and that $\pi=\mathcal O(\Psi)\otimes\sigma$.

Next, note that since $\Lambda$ is a quotient of $G$, $\Class^\infty(\Lambda)$ is a subalgebra of $\Class^\infty(G)$.

\begin{lemma}\label{character_linearity_lemma} The character $\ch_\pi:\h(\H(G))\to \h(\mathcal O(\Psi))$ is $\Class^\infty(\Lambda)$-linear.
\end{lemma}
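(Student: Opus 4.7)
The plan is to derive an explicit chain-level formula for $\ch_\pi$ via Proposition \ref{McCarthy_proposition}, and to read off $\Class^\infty(\Lambda)$-linearity from the fact that $q\colon G\onto \Lambda$ is a group homomorphism onto an abelian group. The algebraic identity that makes everything work is
\[ q(g_0\cdots g_n) = q(g_0)+\cdots +q(g_n), \]
equating the action of $F\in\Class^\infty(\Lambda)$ on $C_n(\H(G))$ given by \eqref{Class_cyclic_equation} with the corresponding action on $C_n(\H(\Lambda))\cong C_n(\mathcal O(\Psi))$, after integrating over the fibres of $q$.

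Fix $f=f_0\otimes\cdots\otimes f_n\in C_n(\H(G))$ and a compact open subgroup $K\subset G$ small enough that $f\in C_n(e_K\H(G)e_K)$. Since $\pi=\mathcal O(\Psi)\otimes\sigma$ and $\sigma^K$ is finite-dimensional, $\pi^K=\mathcal O(\Psi)\otimes\sigma^K$ is free of finite rank over $\mathcal O(\Psi)$, so the McCarthy trace on chains of endomorphisms of $\pi^K$ reduces to the standard matrix trace. By Proposition \ref{McCarthy_proposition}, $\ch_\pi(f)$ is the trace of the chain of $\mathcal O(\Psi)$-linear endomorphisms $\pi(f_0)\otimes\cdots\otimes\pi(f_n)$ of $\pi^K$. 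Fixing a basis $\{v_j\}$ of $\sigma^K$ with dual $\{v^j\}$, the $(j,k)$-entry of $\pi(f_i)$ is the function $\psi\mapsto\int_G f_i(g)\,\psi(g)\,\langle v^j,\sigma(g)v_k\rangle\,\dd g$ in $\mathcal O(\Psi)$; since $\psi(g)$ depends on $g$ only through $q(g)\in\Lambda$, this corresponds under $\mathcal O(\Psi)\cong\H(\Lambda)$ to the element $\lambda\mapsto\int_{q^{-1}(\lambda)} f_i(g)\,\langle v^j,\sigma(g)v_k\rangle\,\dd g$ of $\H(\Lambda)$. Assembling the matrix entries via the cyclic trace formula then produces the chain-level identity
\[ \ch_\pi(f)(\lambda_0,\ldots,\lambda_n) = \int_{q^{-1}(\lambda_0)\times\cdots\times q^{-1}(\lambda_n)}\! f(g_0,\ldots,g_n)\,T(g_0,\ldots,g_n)\,\dd g_0\cdots\dd g_n \]
in $C_n(\H(\Lambda))$, where $T(g_0,\ldots,g_n)=\sum_{j_0,\ldots,j_n}\prod_{i=0}^{n}\langle v^{j_i},\sigma(g_i)v_{j_{i+1}}\rangle$ with indices taken cyclically ($j_{n+1}=j_0$); only the shape of this formula, not the precise value of $T$, will be used.

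Given this formula the linearity is immediate: replacing $f$ by $F\cdot f$ inserts $F(q(g_0\cdots g_n))$ into the integrand by \eqref{Class_cyclic_equation}, which on the domain of integration equals the constant $F(\lambda_0+\cdots+\lambda_n)$, and therefore pulls outside to give exactly $F(\lambda_0+\cdots+\lambda_n)\,\ch_\pi(f)(\lambda_0,\ldots,\lambda_n)=(F\cdot\ch_\pi(f))(\lambda_0,\ldots,\lambda_n)$. The main obstacle is obtaining the chain-level formula in the first place: one must track the McCarthy trace through the composite $\h^m(\star,\End_{\mathcal O(\Psi)}(\pi^K)^{\opp})\to\h(\mathcal O(\Psi))$ of Example \ref{McCarthy_example} explicitly enough for the integral above to emerge, and verify that the Fourier isomorphism $\mathcal O(\Psi)\cong\H(\Lambda)$ intertwines the two $\Class^\infty(\Lambda)$-actions defined by \eqref{Class_cyclic_equation}. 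Once that bookkeeping is complete, the abelianness of $\Lambda$ finishes the proof.
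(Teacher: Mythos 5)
Your proof is correct and follows essentially the same route as the paper: both identify $\ch_\pi$ at the chain level, via Proposition \ref{McCarthy_proposition}, with the trace of the chain $\pi(f_0)\otimes\cdots\otimes\pi(f_n)$ acting on the finite-rank free $\mathcal{O}(\Psi)$-module $\pi^K$, and then observe that the $\mathcal{O}(\Psi)$-part of $\pi(f_i)$ depends on $g$ only through $q(g)\in\Lambda$, so the factor $F(g_0\cdots g_n)$ coming from \eqref{Class_cyclic_equation} matches $F(\lambda_0+\cdots+\lambda_n)$ on the $\H(\Lambda)$ side. The only difference is cosmetic: the paper spans $C_n(\H(G))$ by characteristic functions of double cosets $Hg_iH$, for which the trace collapses to $\ch_\sigma(f_0\cdots f_n)\,\hat g_0\otimes\cdots\otimes\hat g_n$, whereas you keep general $f_i$ and express the same trace as an integral over the fibres of $q$.
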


\begin{proof} The space $C_n(\H(G))$ is spanned by elements of the form $f_0\otimes \cdots\otimes f_n$, where each $f_i$ is the characteristic function of a double coset $Hg_i H$ for some  $g_i\in G$ and compact open subgroup $H\subset G$. Fix an element of this form. The action of $f_i$  on $\pi^H = \mathcal O(\Psi)\otimes \sigma^H$ is given by
\[ \int_G f_i(g) g\otimes g \ \dd g = g_i\otimes \int_G f_i(g) g\ \dd g = \hat{g_i}\otimes f_i,\]
where $\hat{g_i}$ denotes pointwise multiplication by the function $\psi\mapsto \psi(g_i)$. Now, $\pi^H$ is a finite-rank free module over $\mathcal O(\Psi)$, and the trace map $\h(\End(\pi^H))\to \h(\mathcal O(\Psi))$ sends 
\[(\hat{g_0}\otimes f_0)\otimes \cdots \otimes (\hat{g_n}\otimes f_n) \mapsto \ch_\sigma(f_0\cdots f_n)\hat{g_0}\otimes \cdots\otimes \hat{g_n},\] where $\ch_\sigma:\H(G)\to \C$ is the usual character of the admissible representation $\sigma$. Thus,
\[ \ch_{\pi}(f_0\otimes\cdots\otimes f_n) = \ch_\sigma(f_0\cdots f_n) \hat{g_0}\otimes\cdots\otimes \hat{g_n}.\]

Now let $F\in \Class^\infty(\Lambda)$. Using the fact that $G^\circ$ contains all compact subgroups of $G$, one finds that $F(f_0\otimes\cdots\otimes f_n)= F(g_0\cdots g_n)f_0\otimes\cdots\otimes f_n$. On the other hand, it is immediate from the definition \eqref{Class_cyclic_equation} that $F(\hat{g_0}\otimes\cdots\otimes \hat{g_n})=F(g_0\cdots g_n)\hat{g_0}\otimes \cdots\otimes \hat{g_n}$. Thus $\ch_\pi$ commutes with $F$.
\end{proof}

\begin{proof}[Proof of Proposition \ref{character_proposition}]
Lemma \ref{character_linearity_lemma} implies that $1_{\Lambda_c}\ch_\pi = \ch_\pi 1_{G^\circ}$. Now, $G_c = G^\circ\cap G_{cz}$: one inclusion is immediate from the definitions, while the other follows from the fact that $G^\circ$ has compact centre. Therefore $1_{G_c}=1_{G^\circ}1_{G_{cz}}$ in $\Class^\infty(G)$. Applying Clozel's formula, we find that
\[ \ch_\pi - \ch_\pi 1_{G_{cz}} = \sum_{L\lneq G} \ch_\pi \opind^G_L 1_{L_{cz}^+} \pres^G_L = \sum_{L\lneq G} \ch_{\pres^G_L \pi} 1_{L_{cz}^+} \pres^G_L = 0,\] by the second adjoint theorem and the cuspidality of $\pi$. Therefore, assuming the validity of Clozel's formula, we have
\[ 1_{\Lambda_c}\ch_\pi = \ch_\pi 1_{G^\circ} = \ch_\pi 1_{G_{cz}} 1_{G^\circ} = \ch_\pi 1_{G_c}.\]
This proves part (2). Since Clozel's formula is certainly valid in degree zero for all $M$, part (1) follows.
\end{proof}

\begin{remark}
If the cuspidal datum $(M,\sigma)$ is generic (i.e., has trivial Weyl group), then the associated idempotent $E_{[M,\sigma]}\in \Centre(G)$ factors through $\ch_\pi$, and so the formula $1_{\Lambda_c}\ch_\pi = \ch_\pi 1_{G_c}$ gives a spectral description of the compact-restriction operator on the generic Bernstein components. In the spirit of Aubert, Baum, Plymen and Solleveld's conjectures on the geometric structure in the smooth dual of $G$ (\cite{ABP},\cite{ABPS}), one might ask whether this identification of the compact-restriction operator as a projection onto invariant differential forms is in fact valid on all Bernstein components. 
\end{remark}

\section{A ``Clozel formula'' for affine Weyl groups}\label{Weyl_section}

In this section we prove an analogue of Clozel's formula (\cite[Proposition 1]{Clozel}, cf. Conjecture \ref{Clozel_conjecture}) in the Hochschild and cyclic homology of affine Weyl groups. We begin with some generalities on induction and restriction maps for discrete groups.

Let $G$ be a discrete group, and let $\C(G)$ denote the complex group algebra. Every finite-index subgroup $L\subseteq G$ gives rise to a restriction functor $\pres_L:\Mod(G)\to \Mod(L)$ and an induction functor $\pind_L : \Mod(L)\to \Mod(G)$. Both functors preserve the subcategories of finitely generated projective modules, so they induce maps in Hochschild and cyclic homology: 
\[ \pind_L:\h(\C(L))\to \h(\C(G))\qquad \text{and}\qquad \pres_L:\h(\C(G))\to \h(\C(L)).\]
These maps are easy to describe explicitly: $\pind_L$ is the map induced by the obvious inclusion of cyclic modules $C(\C(L))\into C(\C(G))$, while $\pres_L$ is the trace map coming from a choice of $L$-equivariant isomorphism $\C(G)\cong \C(L)^{[G:L]}$. (The map in group homology corresponding to $\pres_L$ has been computed by Bentzen and Madsen \cite[Proposition 1.4]{Bentzen-Madsen}, and also by Blanc and Brylinski \cite[Proposition 8.2]{Blanc-Brylinski}.)

The algebra $\Class(G)$ of class functions on $G$ acts on the precyclic module $C(\C(G))$, according to the formula
\[ F(g_0\otimes g_1\otimes \cdots\otimes g_n)= F(g_0g_1\cdots g_n)g_0\otimes g_1\otimes\cdots\otimes g_n\] for $F\in \Class(G)$ and $g_i\in G$.
For each finite-index subgroup $L\subseteq G$, and each ${F}\in \Class(L)$, we let $\widetilde{F}\in \Class(G)$ be the function
\[ \widetilde{F}(g) = \sum_{k\in L\backslash G} F(kgk^{-1}),\] where $F(x)\coloneq 0$ if $x\not\in L$. 

\begin{lemma} \label{discrete_induction_lemma}
Let $L$ be a finite-index subgroup of a discrete group $G$. For each $F\in \Class(L)$ one has
$\widetilde{F} = \pind_L F \pres_L$ as operators on $\h(\C(G))$.
\end{lemma}

\begin{proof}
Choose a set $K\subset G$ of representatives for $L\backslash G$. On an elementary tensor $g_0\otimes\cdots\otimes g_n\in C_n(\C(G))$ one has
 \[ \begin{aligned} \pind_L F \pres_L&(g_0\otimes\cdots\otimes g_n) = \\ &  \sum_{\substack{k_0,\ldots,k_n\\ \in K^{n+1}}} \left(\begin{array}{c} 1_{L^{n+1}}(k_ng_0k_0^{-1},\ldots, k_{n-1}g_n k_n^{-1})\ F(k_ng_0\cdots g_n k_n^{-1})\ \times\\  k_ng_0k_0^{-1}\otimes\cdots\otimes k_{n-1}g_nk_n^{-1} 
 \end{array}
\right)
\end{aligned}
\]
where $1_{L^m}$ denotes the characteristic function of $L^{m}$ in $G^{m}$. For each $i=0,\ldots,n$, define $h_i:C_n(\C(G))\to C_{n-1}(\C(G))$ by
\[ \begin{aligned} h_i&(g_0\otimes\cdots\otimes g_n) = \\ & \sum_{\substack{k_0,\ldots,k_i\\ \in K^{i+1}}} \left(\begin{array}{c} 1_{L^i}(k_0 g_1 k_1^{-1},\ldots,k_{i-1}g_i k_i^{-1})\ F(k_i g_{i+1} \cdots g_n g_0\cdots g_i k_i^{-1})\ \times \\ g_0 k_0^{-1}\otimes k_0 g_1 k_1^{-1}\otimes\cdots \otimes k_{i-1} g_i k_i^{-1}\otimes k_i \otimes g_{i+1}\otimes\cdots \otimes g_n\end{array}
\right).
\end{aligned}
\]

It is a straightforward matter to verify that these maps $h_i$ constitute a presimplicial homotopy from $\widetilde{F}$ to $\pind_L F \pres_L$ (see, e.g., \cite[1.0.8]{Loday} for the terminology). We conclude that $\widetilde{F}=\pind_L F \pres_L$ on Hochschild homology. The Hochschild-to-cyclic spectral sequence \cite[2.1.7]{Loday} then gives the same equality on cyclic homology.
\end{proof}

Now let $V$ be a finite-dimensional Euclidean space, let $R\subset V$ be a (reduced) root system, let $W$ be the Weyl group of $R$, let $\Lambda\subset V$ be the lattice generated by $R$, and let $G= \Lambda\rtimes W$ be the affine Weyl group (of the coroot system $R^{\vee}$). We refer to \cite{Bourbaki_Lie} for basic facts and terminology regarding root systems.
Fix a basis $B$ for $R$. For each subset $S\subseteq B$, let $W_S$ be the subgroup of $W$ generated by the reflections $\{s_\alpha\ |\ \alpha\in S\}$, and let $G_S= \Lambda\rtimes W_S$. 
Notice that the centre of $G_S$ is the subgroup $\Lambda^{W_S}$ of $W_S$-invariants in $\Lambda$. For every $\psi\in \Hom(\Lambda^{W_S},\Z)$, the positive multiple $|W_S|\psi$ of $\psi$ extends uniquely to a homomorphism $\psi_S:G_S\to \Z$. In particular, the roots $\alpha\in R$ induce homomorphisms $\alpha_S:G_S\to \Z$.

\begin{definition}
For each $S\subseteq B$, let $G_S^+\subseteq G_S$ be the subset 
\[ G_S^+= \{g\in G_S\ |\ \alpha_S(g)>0\ \text{for all}\ \alpha\in  B\setminus S\},\]
let $(G_S)_{cz}$ be the set of elements of $G_S$ having finite order modulo the centre, and let $(G_S)_{cz}^+=(G_S)_{cz}\cap G_S^+$.
\end{definition}

For example, $(G_B)_{cz}^+$ is the set of torsion elements of $G$, while $(G_{\emptyset})_{cz}^+$ is the intersection of $\Lambda$ with the fundamental Weyl chamber determined by $B$.

Both $(G_S)_{cz}$ and $G_S^+$ are invariant under conjugation in $G_S$, so the characteristic function $1_{(G_S)_{cz}^+}$ acts on $\h(\C(G_S))$. Letting $\pind_{S}$ and $\pres_{S}$ denote the induction and restriction maps for the finite-index subgroup $G_S\subseteq G$, our ``Clozel formula'' for $G$ takes the following form: 

\begin{proposition}\label{Weyl_Clozel_proposition}
Let $G$ be the affine Weyl group associated to a root system $R$, and choose a basis $B\subset R$. Then $\{\pind_S 1_{(G_S)_{cz}^+} \pres_S\ |\ S\subseteq B\}$ is a set of pairwise-orthogonal, $\Class(G)$-linear, idempotent endomorphisms of $\h(\C(G))$, summing to the identity:    
\[ \sum_{S\subseteq B} \pind_{S} 1_{(G_S)_{cz}^+} \pres_{S} = 1.\]
\end{proposition}

We begin the proof with an alternative description of the sets $(G_S)_{cz}^+$. Let $\Lambda_S\subset \Lambda$ be the intersection of $\Lambda$ with the face of the fundamental Weyl chamber determined by $S$:
\[ \Lambda_S \coloneq \left(\bigcap_{\alpha\in S} \{\lambda \ |\ \langle \lambda,\alpha\rangle=0\} \right) \cap \left(\bigcap_{\alpha\in \Delta\setminus S} \{\lambda \ |\ \langle \lambda,\alpha\rangle >0\}\right).\]
An element $w\in W$ fixes a point in $\Lambda_S$ if and only if $w\in W_S$. One has a partition
\begin{equation}\label{Lambda_equation}
  \Lambda= \bigsqcup_{\substack{S\subseteq B,\\ k\in W/W_S}} k\Lambda_S.
\end{equation}
Notice that for each $g\in G$, the element $g^{|W|}$ lies in $\Lambda$.

\begin{lemma}\label{Weyl_Clozel_lemma}
For each subset $S\subseteq B$ one has $(G_S)_{cz}^+ = \{g\in G\ |\ g^{|W|}\in \Lambda_S\}$. 
\end{lemma}

\begin{proof}
We first claim that if $g^{|W|}\in \Lambda_S$, then $g\in G_S$. To show this, we write $g=\lambda w\in \Lambda\rtimes W$. Then $g^{|W|} = \frac{|W|}{n} (\lambda+w\cdot\lambda+\ldots+ w^{n-1}\cdot\lambda)$, where $n$ is the order of $w$ in $W$, and ``$\cdot$'' denotes the action of $W$ on $\Lambda$. It is then clear that $w\cdot g^{|W|}=g^{|W|}$, so $w$ fixes a point of $\Lambda_S$. This ensures that $w\in W_S$, so $g\in G_S$.

Suppose now that $g\in G_S$. It follows immediately from the definitions that $g\in (G_S)_{cz}^+$ if and only if $g^m\in (G_S)_{cz}^+$ for every $m\geq 1$. Taking $m=|W|$, it will thus suffice to prove that $\Lambda\cap (G_S)_{cz}^+=\Lambda_S$. The centre of $G_S$ is $\Lambda^{W_S}$, and the quotient $\Lambda/\Lambda^{W_S}$ is torsion-free (because $W_S$ acts linearly on $V$). It follows that 
\[\Lambda\cap (G_S)_{cz}=\Lambda^{W_S}=\{\lambda\in \Lambda\ |\ \langle \lambda,\alpha\rangle =0\text{ for all }\alpha\in S\}.\] 
Now, for $\lambda\in \Lambda^{W_S}$ and $\alpha\in  B$ we have $\alpha_S(\lambda)=|W_S|\langle \lambda,\alpha\rangle$, and so 
\[\Lambda^{W_S}\cap G_S^+=\{\lambda\in \Lambda^{W_S}\ |\  \langle \lambda,\alpha\rangle >0\text{ for all }\alpha\in  B\setminus S\}.\]
Combining the two displayed equalities gives $\Lambda\cap (G_S)_{cz}\cap G_S^+=\Lambda_S$ as required.
\end{proof}

\begin{proof}[Proof of Proposition \ref{Weyl_Clozel_proposition}]
The decomposition \eqref{Lambda_equation} implies that for each $g\in G$ there is a unique subset $S_g\subseteq  B$ and a unique $k\in W_{S_g}\backslash W$ such that $(kgk^{-1})^{|W|}\in \Lambda_{S_g}$. Applying Lemma \ref{Weyl_Clozel_lemma} gives, for each $S\subseteq B$,
\[ \sum_{k\in G_S\backslash G} 1_{(G_S)_{cz}^+}(kgk^{-1})=\begin{cases} 1&\text{if }S=S_g,\\ 0&\text{otherwise.}\end{cases}\]
Thus the functions $\widetilde{1_{(G_S)_{cz}^+}}$ constitute a pairwise-orthogonal family of idempotents in $\Class(G)$ summing to the identity. Lemma \ref{discrete_induction_lemma} implies that the same is true of the operators $\pind_S 1_{(G_S)_{cz}^+} \pres_S$.
\end{proof}

Proposition \ref{Weyl_Clozel_proposition} admits a straightforward generalisation to groups of the form $G' = G\times \Z^n$, where $G$ is an affine Weyl group. For each subset $S$ of some basis for the root system of $G$, one puts $G'_S\coloneq G_S\times \Z^n$ and $(G'_S)_{cz}^+\coloneq (G_S)_{cz}^+\times \Z^n$. Then $\sum_S \pind_S 1_{(G'_S)_{cz}^+} \pres_S=1$ on $\h(\C(G'))$, where $\pind_S$ and $\pres_S$ are the induction and restriction maps for the finite-index subgroups $G'_S\subseteq G'$; the proof is the same as above. Groups of this kind arise naturally in the study of reductive $p$-adic groups. For example, let ${T}$ be the diagonal torus in $p$-adic $\GL_n$, with normaliser ${N}$ and maximal compact subgroup ${T}_c$. The quotient $N/T_c$ is isomorphic to $G\times \Z$, where $G$ is the affine Weyl group of type $\widetilde{A_n}$.

\bibliography{compact_restriction}{}
\bibliographystyle{plain}

\end{document}